\tikzset{
parallel segment below/.style={
   segment distance/.store in=\segDistance,
   segment pos/.store in=\segPos,
   segment length/.store in=\segLength,
   to path={
   ($(\tikztostart)!\segPos!(\tikztotarget)!\segLength/2!(\tikztostart)!\segDistance!90:(\tikztotarget)$) -- 
   ($(\tikztostart)!\segPos!(\tikztotarget)!\segLength/2!(\tikztotarget)!\segDistance!-90:(\tikztostart)$)  \tikztonodes
   }, 
   segment pos=.5,
   segment length=5ex,
   segment distance=2mm,
},
}
\tikzset{
parallel segment above/.style={
   segment distance/.store in=\segDistance,
   segment pos/.store in=\segPos,
   segment length/.store in=\segLength,
   to path={
   ($(\tikztostart)!\segPos!(\tikztotarget)!\segLength/2!(\tikztostart)!\segDistance!-90:(\tikztotarget)$) -- 
   ($(\tikztostart)!\segPos!(\tikztotarget)!\segLength/2!(\tikztotarget)!\segDistance!90:(\tikztostart)$)  \tikztonodes
   }, 
   segment pos=.5,
   segment length=5ex,
   segment distance=2mm,
},
}
\newtheorem*{con}{Conjecture}
\newenvironment{reptheorem}[1]
  {\rthm}
\newcommand{\m}[1]{vc(#1)}
\newcommand{\e}[1]{evc(#1)}
\newcommand{\sg}{single-guarded}
\newcommand{\VC}{{\sc Minimum Vertex Cover}}
\newcommand{\EVC}{{\sc Minimum Eternal Vertex Cover}}
\newcommand{\sep}{series-parallel}
\newcommand{\inte}{internal path}
\newcommand{\exte}{external path}
\newcommand{\spath}{$s$-path}
\newcommand{\tpath}{$t$-path}
\newcommand{\pe}{\mathcal{P}_{even}}
\newcommand{\po}{\mathcal{P}_{odd}}
\newcommand{\cf}[2]{U_{#1,#2}}
\newcommand{\al}{alt}
\newcommand\restr[2]{{
  \left.\kern-\nulldelimiterspace
  \littletaller
  \right|_{#2}
  }}
\newcommand{\littletaller}{\mathchoice{\vphantom{\big|}}{}{}{}}
\title{The Minimum Eternal Vertex Cover Problem on a Subclass of Series-Parallel Graphs}
\titlerunning{Minimum Eternal Vertex Cover on Series-Parallel Graphs}
\author{Tiziana {Calamoneri}}{Sapienza University of Rome, Department of Informatics, Italy \and \url{https://sites.google.com/di.uniroma1.it/tiziana-calamoneri/home-page}}{calamo@di.uniroma1.it}{https://orcid.org/0000-0002-4099-1836}{}
\author{Federico {Corò}}{University of Padua, Mathematics Department, Italy \and \url{https://sites.google.com/view/federicocoro/home?authuser=0}}{Fedo.coro@gmail.com}{https://orcid.org/0000-0002-7321-3467}{}
\author{Giacomo {Paesani}}{Sapienza University of Rome, Department of Informatics, Italy \and \url{https://giacomopaesani.github.io/}}{paesani@di.uniroma1.it}{https://orcid.org/0000-0002-2383-1339}{}
\authorrunning{T. Calamoneri et al.}
\keywords{eternal vertex cover, melon graphs, series-parallel graphs.}
\begin{document}

\maketitle

\begin{abstract}
{\em Eternal vertex cover} is the following two-player game between a defender and an attacker on a graph. Initially, the defender positions $k$ guards on $k$ vertices of the graph; the game then proceeds in turns between the defender and the attacker, with the attacker selecting an edge and the defender responding to the attack by moving some of the guards along the edges, including the attacked one. The defender wins a game on a graph $G$ with $k$ guards if they have a strategy such that, in every round of the game, the vertices occupied by the guards form a vertex cover of $G$, and the attacker wins otherwise. The {\em eternal vertex cover number} of a graph $G$ is the smallest number $k$ of guards allowing the defender to win and \EVC\ is the problem of computing the eternal vertex cover number of the given graph.

We study this problem when restricted to the well-known class of \sep\ graphs. In particular, we prove that \EVC\ can be solved in linear time when restricted to melon graphs, a proper subclass of series-parallel graphs. Moreover, we also conjecture that this problem is NP-hard on \sep\ graphs.
\end{abstract}

\section{Introduction}

A vertex cover of a graph $G=(V, E)$ is a set $S\subseteq V$ such that, for every edge in $E$, at least one of its endpoints is in $S$. A minimum vertex cover of $G$ is a vertex cover of $G$ of minimum cardinality. This minimum value, denoted by $\m{G}$, is called the vertex cover number of $G$. The \VC\ problem consists in determining this number.

The notion of {\em eternal vertex cover}, first introduced by Klostermeyer and Mynhardt~\cite{KlostermeyerM09}, exploits the above definition in the context of a two-player multi-round game, where a {\em defender} uses mobile guards placed on some vertices of $G$ to protect the edges of $G$ from an {\em attacker}. The game begins with the defender placing guards on some vertices, at most one {\em per} vertex. The total number of guards remains the same throughout the game. In each round of the game, the attacker chooses an edge to attack. In response, the defender moves the guards so that each guard either stays at its current location or moves to an adjacent vertex; the movement of all guards in a round is assumed to happen in parallel. If a guard crosses the attacked edge during this move, it {\em protects} the edge from the attack. The defender wins if the edges can be protected by any sequence of attacks. If an attacked edge cannot be protected in some round, the attacker wins. It is easy to see that a necessary condition to protect the graph is that the set of vertices where the guards lie is a vertex cover, and this justifies the name of eternal vertex cover.

The \EVC\ problem consists in determining the {\em eternal vertex cover number} of $G$, denoted by $\e{G}$, that is, the minimum number of guards allowing the defender to protect all the edges of $G$. In the literature, $\e{G}$ is sometimes denoted by $\alpha^\infty_m(G)$ (see for example~\cite{KlostermeyerM09}) or by $\tau^{\infty}(G)$~\cite{AFI15}.

The \EVC\ finds applications in network security, drone surveillance, and war scenarios. For example, some agents are deployed on the nodes of a network in such a way that the agents watch every connection between nodes. A malicious attack forces an agent to traverse that connection and, more in general, to reconfigure the position of the agents. The eternal vertex cover game asks whether it is possible for a set of agents to respond to any sequence of attacks. Minimizing the number of agents required for an everlasting defense and understanding a winning strategy is clearly beneficial to resource allocation. 

A {\em Series-parallel graph} can be recursively constructed by observing that a single edge is a series-parallel graph, and by composing smaller series-parallel graphs either in {\em series} or in {\em parallel}. Although this class has been introduced a long time ago~\cite{Du65}, it still attracts the attention of researchers (see, {\em e.g.},~\cite{AN20,ASW23,BMM24,DissauxDNN23,Ma19}). Series-parallel graphs are a well-known and studied graph class from a theoretical perspective and naturally model two-terminal networks that are constructed with the series and parallel composition. In this case, the total values of the fundamental parameters can be computed directly.

In this paper, we study the \EVC\ problem when restricted to series-parallel graphs: we prove that it can be linearly solved for a proper subclass of series-parallel graphs while we conjecture that it remains NP-hard on the whole class.

In the following, we survey the existing literature both on eternal vertex cover and on \sep\ graphs, and then we describe in detail our results.

\subsection{Previous Results}

Since its definition, the \EVC\ problem has been deeply studied from a computational complexity point of view: deciding whether \(k\) guards can protect all the edges of a graph is NP-hard~\cite{FominGGKS10}; it remains so even for bipartite graphs~\cite{MisraN22} and for biconnected internally triangulated planar graphs, although there exists a polynomial time approximation scheme for computing the eternal vertex cover number on this class of graphs~\cite{BabuCFPRW22}. The problem can be exactly solved in $2^{\mathcal{O}(n)}$ time and is FPT parameterized by solution size~\cite{FominGGKS10}. 

On the positive side, there are a few graph classes for which the problem can be efficiently solved. Indeed, it is solvable in linear time on trees and cycles~\cite{KlostermeyerM09}, maximal outerplanar graphs~\cite{BKPW22}, chain and split graphs~\cite{PP24}. Moreover, it is solvable in quadratic time on chordal graphs~\cite{BabuCFPRW22,BabuP22} and  solvable in polynomial time on co-bipartite graphs~\cite{BabuMN22}, cographs~\cite{PP24} and generalized trees~\cite{AFI15}.

The vertex cover and the eternal vertex cover number are linearly equivalent parameters (see for example~\cite{BMPY23} for a formal definition of linear equivalent parameters), and it holds that $\m{G}\leq \e{G}\leq 2\m{G}$~\cite{KlostermeyerM09}. Consequently, it is also interesting to understand for which graphs this relation holds that these two parameters are very close: the authors of~\cite{KlostermeyerM09,KMC16} show different conditions for equality (graphs for which this relation holds are generally called {\em spartan}), while in~\cite{BabuCFPRW22}, it is showed that \(\e{G} \le \m{G} + 1\) for every locally connected graph $G$.

One of the reasons of interest for \sep\ graphs is that many combinatorial problems that are computationally hard on general graphs become polynomial-time or even linear-time solvable when restricted to the \sep\ graphs ({\em e.g.}, vertex cover~\cite{tollis89}, dominating set~\cite{Va12}, coloring~\cite{AP89}, graph isomorphism~\cite{GS15,LPPS14} and Hamiltonian cycle~\cite{EGW01,GHO11}).

On the other hand, very few problems are known to be NP-hard for \sep\ graphs. These include the subgraph isomorphism~\cite{DLP96,GN96,MT92}, the bandwidth~\cite{Sy83}, the edge-disjoint paths~\cite{ZTN00}, the common subgraph~\cite{KKM18} and the list edge and list total coloring~\cite{ZMN05} problems.

\subsection{Our Results}\label{sec:results}

In this work, we study \EVC\ on the class of series-parallel graphs. First, we consider the subclass of \emph{melon graphs}, which is constituted by a set of pairwise internally disjoint paths linking two vertices, and in~\Cref{sec:melon}, the core of the paper, we prove the following result:

\begin{theorem}\label{thm:melons}
\EVC~is linear-time solvable for melon graphs.
\end{theorem}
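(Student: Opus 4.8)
The plan is to exploit the very restricted structure of a melon graph $M$: two terminals $s$ and $t$ joined by $r$ internally disjoint paths $P_1,\dots,P_r$, where path $P_i$ has, say, $\ell_i$ internal vertices (so length $\ell_i+1$). First I would reduce to a normal form. A path with $\ell_i=0$ is just the edge $st$; several such parallel edges behave like one for vertex-cover purposes, so I may assume at most one direct $st$-edge. A path with $\ell_i=1$ (a single internal vertex adjacent to both $s$ and $t$) is a ``triangle-like'' strand and should be treated separately, since on such strands a single guard sitting on the internal vertex already covers both incident edges and can be shuffled cheaply. Longer strands ($\ell_i\ge 2$) are ``path-like'' and essentially behave like the internal edges of a long path, for which the eternal vertex cover number is understood from the tree/path case in~\cite{KlostermeyerM09}. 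So the first block of the proof is a case analysis on the multiset $\{\ell_i\}$, collapsing it to a small combinatorial description: the number $a$ of strands of length $1$, the number $b$ of strands of length $2$, the number $c$ of strands of length $\ge 3$, plus a Boolean for the presence of the edge $st$. All of these can be read off in linear time.

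Next I would pin down the exact value of $\e{M}$ in terms of $(a,b,c,\text{edge }st)$. The lower bound comes from two kinds of obstructions. The vertex-cover bound $\e{M}\ge \m{M}$ is immediate and $\m{M}$ is trivially computable for a melon (take $\{s,t\}$ if that already covers everything, i.e. when every strand has $\ell_i\le 1$; otherwise a minimum cover picks up $\lceil \ell_i/2\rceil$ or similar on each long strand). The more delicate lower bound is a ``vulnerability'' argument: I would show that the defender must keep enough guards so that, after the attacker hits a carefully chosen edge, the guards can be reconfigured to a vertex cover again — this is where parallel strands interact, because moving a guard off $s$ to defend one strand may expose $s$'s edges on all the other strands. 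Concretely I expect the true value to be $\m{M}$ or $\m{M}+1$ or $\m{M}+2$ depending on how many strands of each short type are present and whether $s$ and $t$ can both be ``permanently'' occupied; the $+1$/$+2$ terms capture the need for one or two roaming guards to patrol whichever strand is currently under attack. The upper bound is then an explicit winning strategy: keep a fixed occupying set (e.g. $\{s,t\}$ together with every other vertex on each long strand), plus one or two extra guards that, in response to an attack on a given strand, walk down that strand and back, with the guards on $s,t$ locally shifting by one and immediately shifting back on subsequent moves. I would argue correctness of this strategy by a simple invariant: between attacks the guard configuration always lies in a bounded set of ``canonical'' configurations, and from each canonical configuration every attack has a response returning to a canonical configuration.

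The main obstacle I anticipate is the interaction at the terminals $s$ and $t$ when several short strands coexist — in particular distinguishing when one extra guard suffices versus when two are needed, and proving the matching lower bound in the tight cases. The clean way to handle this is to isolate a small ``local'' lemma about a star-like gadget at $s$ (a bunch of length-$1$ and length-$2$ strands hanging off $s$ and $t$) computing its eternal vertex cover number exactly, and then show that in a melon these local contributions combine additively up to a bounded additive slack, because an attack on one strand never forces more than a bounded reconfiguration elsewhere. Once the value is nailed down as a closed-form function of the four quantities above, linear-time solvability is immediate: scan the graph, classify each strand, evaluate the formula. I would structure the section as: (i) normal form and classification; (ii) the local gadget lemma with its own lower and upper bounds; (iii) assembling the global formula for $\e{M}$ with matching bounds; (iv) observing the whole computation is linear time.
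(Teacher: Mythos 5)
Your plan has a genuine gap at its core: the quantities you propose to reduce to, namely the number of strands with one internal vertex, with two internal vertices, with at least three internal vertices, plus the presence of an $st$-edge, do not determine the eternal vertex cover number. What actually governs the answer is the \emph{parity} of the strand lengths, i.e.\ how many of the parallel paths have even length and how many have odd length, and this information is destroyed when you collapse all long strands into a single bucket. Concretely, take $k=3$ strands each of length $4$ versus $k=3$ strands each of length $5$: both instances have identical values of your parameters ($a=b=0$, $c=3$, no $st$-edge), yet in the first (all strands even) the graph has a unique minimum vertex cover, which can never serve as a configuration of an eternal vertex cover class, so $\e{G}=\m{G}+1$, while in the second (all strands odd) the graph is bipartite with both sides of the bipartition being minimum vertex covers, and these two sides already form an eternal vertex cover class, so $\e{G}=\m{G}$. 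Hence the closed-form function you intend to evaluate in step (iii) does not exist as a function of your four quantities, and the ``normal form'' of step (i) is the wrong reduction. The paper's case analysis is instead driven by the partition of the strands into even and odd paths (all even, all odd, mixed with various counts), and in the odd case it invokes the fact that odd melons are elementary bipartite graphs to get $\e{G}=\m{G}$ directly.

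A secondary, related problem is your proposed defense strategy and lower bound. The winning strategies for melons are not of the form ``a fixed occupied set plus one or two roaming guards patrolling the attacked strand'': the correct defenses shift \emph{every} guard along an entire strand (or along two strands forming a cycle through $s$ and $t$), transforming, e.g., an alternating pattern anchored at $s$ into one anchored at $t$; in the odd case not even $\{s,t\}$ can be permanently occupied, since the two configurations are the two sides of the bipartition and each contains exactly one of $s,t$. Likewise, the tight lower bound in the $\m{G}+1$ cases does not come from a local ``vulnerability at $s$'' gadget but from a global structural fact about minimum vertex covers of the whole melon (uniqueness, or impossibility of defending edges on even strands from any minimum cover). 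So the additive, gadget-based assembly in step (ii)--(iii) would not go through as described; the argument needs to be reorganized around strand parity rather than strand length thresholds.
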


\medskip

The proof of the aforementioned result is based on a case-by-case analysis classifying melon graphs according to the number of paths of even and odd lengths. For each possible input melon graph, we not only compute the eternal vertex cover number in linear time, but we also provide a minimum eternal vertex cover class and defense strategies.

In~\Cref{sec:sp}, we extend our analysis to the whole class of \sep\ graphs and propose the following:

\begin{con}\label{con:sp}
\EVC\ is NP-hard on series-parallel graphs.
\end{con}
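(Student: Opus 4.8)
The plan would be to prove \Cref{con:sp} by a polynomial-time reduction from a suitable NP-hard problem: the most natural candidates are a restricted satisfiability problem (monotone or not-all-equal \textsc{3-Sat}) or, exploiting how delicately $\e{M}$ depends on the multiset of path lengths of a melon $M$, a numerical problem such as \textsc{3-Partition} with item sizes encoded as path lengths. From an instance $\varphi$ one would build a series-parallel graph $G_\varphi$ and a target $k$ so that $\e{G_\varphi}\le k$ if and only if $\varphi$ is a yes-instance, assembling $G_\varphi$ exclusively by series and parallel compositions of small melon-like blocks so that it is $K_4$-minor-free by construction. The central object would be a \emph{variable gadget}: a two-terminal series-parallel fragment admitting exactly two ``cheap'' eternal defenses --- a \textsc{true} mode keeping a guard on one terminal and a \textsc{false} mode keeping a guard on the other --- while every configuration that is ``mixed'' at the terminals costs at least one extra guard. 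Using series and parallel composition to fan each variable gadget's terminals out to several \emph{clause gadgets}, one arranges that a clause gadget can be defended within its local guard budget precisely when at least one incident variable gadget is in the satisfying mode; the global budget $k$ is the sum of the local budgets, with no slack.

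Carrying this out would first require the right \emph{local machinery}: for every gadget one must determine, exactly as in the proof of \Cref{thm:melons}, not only its eternal vertex cover number but the whole \emph{class} of minimum-size guard configurations from which the defender survives, and in particular which terminal ``modes'' those configurations realize. The overall lower bound would then be obtained by composing adversary strategies: the attacker first plays, inside each gadget, a long burst of attacks that forces the defender's guard count and terminal mode there down to a minimum configuration, and then attacks the linking edges to expose an inconsistency whenever the resulting global pattern of modes does not correspond to a satisfying assignment. Ideally this is packaged as a global potential --- a per-gadget weighted guard count that no legal defender move can increase and that can drop below $k$ only in ``satisfying'' states --- so that the attacker's bursts drive the potential to an unsustainable level exactly when $\varphi$ is unsatisfiable.

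The step I expect to be the genuine obstacle is making the gadget modes \emph{robust over the entire game}, not just within one round: because the attacker may keep hammering a gadget, the guards reshuffle, and a mode that was forced in one round can be abandoned in the next unless the construction guarantees that re-forming a vertex cover after any attack necessarily re-commits each gadget to one of its two intended modes. Equivalently, the hard direction is the lower bound --- showing that when $\varphi$ is unsatisfiable the attacker beats \emph{every} defender strategy --- which demands a composition-friendly impossibility argument of the sort that is already technically heavy in the single-melon analysis of \Cref{thm:melons} and that does not obviously glue together across an SP-tree. A further, concrete difficulty is staying inside the series-parallel world: the absence of $K_4$ minors rules out the ``crossing'' gadgets that make vertex-cover-style reductions routine, so a variable's state must be transmitted through long internally disjoint paths, and one has to check that these transmission lines do not themselves change $\e{G_\varphi}$ by amounts that swamp the $\pm 1$ effects on which the whole reduction hinges.
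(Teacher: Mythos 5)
There is a genuine gap here, and it is worth being precise about its nature: the statement you are addressing is stated in the paper as a \emph{conjecture}, and the paper itself offers no proof of it. What the paper provides in its final section is only circumstantial support --- \Cref{lem:unbounded}, showing that on biconnected \sep\ graphs the eternal vertex cover number can exceed the vertex cover number by an arbitrary additive amount and approach ratio $2$, and \Cref{lem:alt}, characterizing melon graphs as exactly the \sep\ graphs whose SP-decompositions have at most one alternation --- together with an informal discussion of why dynamic programming over the SP-tree does not obviously compose, because defenses are non-local. Your text is likewise not a proof: it is a reduction \emph{plan}. No variable or clause gadget is actually constructed; the claimed ``exactly two cheap modes'' property of the variable gadget is not established for any concrete two-terminal \sep\ fragment; the target budget $k$ and the correspondence between satisfying assignments and defender strategies are not defined; and, most importantly, the lower-bound direction (an attacker strategy beating every defender when the formula is unsatisfiable) is only described as a desideratum (a ``global potential'') rather than argued. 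You candidly identify these as the obstacles, but identifying them does not close them, and they are exactly where the difficulty of the problem lives: the paper's own evidence (non-local defenses, the failure of $\e{G}$ to decompose over series and parallel compositions) cuts both ways, since it also undermines the modular ``sum of local budgets with no slack'' accounting your reduction needs.

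Concretely, the step that would fail as written is the claim that a gadget's ``mode'' can be made persistent across the whole game. In the eternal setting the defender may pass through configurations of \emph{larger} local cost in one gadget while temporarily borrowing guards elsewhere, and nothing in your sketch prevents guard migration through the shared terminals $s$ and $t$ of the SP-composition; this is precisely the phenomenon exhibited in the proof of \Cref{lem:unbounded}, where every eternal configuration is forced to carry extra guards adjacent to the high-degree cut vertices. Until you exhibit an explicit family of gadgets for which you can prove both (i) the exact eternal vertex cover number of each gadget together with the full set of minimum configurations at its terminals (the analogue of the case analysis behind \Cref{thm:melons}, which already required separate arguments for even, odd and mixed melons), and (ii) a composition lemma showing that these local analyses survive parallel and series composition against an adaptive attacker, the proposal remains a research programme rather than a proof, and the conjecture remains open --- as it does in the paper.
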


We formalize some arguments supporting this conjecture: we gather evidence that the class of melon graphs is substantially small when compared to \sep\ graphs with respect to a number of structural and algorithmic properties.

\section{Terminology}
For a positive integer $k$, we denote with $[k]$ the set $\{1,\ldots, k\}$. 

Let $G=(V, E)$ be a graph, on which we recall the following definitions. Given a vertex $v$ of $G$, the {\em closed neighborhood of} $v$ is the set of vertices that are adjacent to $v$ and $v$ itself, and it is denoted by $N[v]$. A {\em path} $P=(V(P),E(P))$ is a graph, where $V(P)$ is $\{v_0, \ldots , v_\ell\}$, $\ell \geq 1$, and $E(P)$ is $\{ v_iv_{i+1}~|~i=0, \ldots, \ell-1\}$; $\ell$ is the {\em length} of $P$.

A graph $G=(V, E)$ is {\em bipartite} if it is possible to partition the vertex set into two not empty subsets: $V=A \cup B$ so that each edge of $E$ can only connect one vertex in $A$ with one vertex in $B$; in this case, we represent $G$ with $(A\cup B, E)$. For extended graph terminology, we refer to \cite{D12}.

\subsection{Eternal Vertex Cover}

Given a graph $G=(V, E)$ and a subset of vertices $U\subseteq V$, we imagine each vertex of $U$ hosting one guard, and all the edges incident to these vertices are considered {\em guarded}. The guards are allowed to move from one vertex to another only through an edge connecting them. 

An {\em attack} is the selection of one edge $e\in E$ by the attacker. The defender {\em protects} an attacked edge if it can move a guard along that edge. Thus, it is possible only to protect guarded edges and a necessary condition for $U\subseteq V$ to be able to protect any edge from an attack is that $U$ is a vertex cover of $G$.

Consider a guarded edge $e=vw$ and, without loss of generality, assume that $v\in U$. A {\em defense} from the attack on $e$ is defined as a one-to-one function $\phi: U \rightarrow V$ such that $e$ is protected, that is $\phi(v) = w$, and for each $u \in U$, $\phi(u) \in N[u]$. Given any vertex $u \in U$, we say that the guard on $u$ {\em  shifts to $\phi(u)$} and, by extension, $U$ {\em shifts to} $U'$ where $U'=\phi(U)=\{ \phi(u) \,\,\mbox{ s.t. } u \in U\}$. 

The protection of an attacked edge $vw$ with a guard on both endpoints can be easily guaranteed by shifting the guard on $v$ to $w$, the guard on $w$ to $v$, and every other guard stays on the same vertex. So, in the following, we implicitly assume that an attack always happens on an edge guarded by one guard, and called {\em \sg} edge.

We are now ready to give the notion of eternal vertex cover.

\begin{definition} \cite{BabuCFPRW22}
Given a graph $G$, a family $\mathcal{U}$ of vertex covers of $G$ all of the same cardinality is an {\em eternal vertex cover class} of $G$ if the defender can protect any attacked edge by shifting any vertex cover of $\mathcal{U}$ to another vertex cover of $\mathcal{U}$. Each vertex cover of $\mathcal{U}$ is called a {\em configuration} for $G$. The {\em size} of an eternal vertex cover class $\mathcal{U}$ is the cardinality of any configuration of $\mathcal{U}$. The \EVC\ problem consists of finding the minimum size of an eternal vertex cover class for $G$, and this number is denoted with $\e{G}$. An eternal vertex cover class of size $\e{G}$ is said to be a {\em minimum eternal vertex cover class}.
\end{definition}

In the following, in order to determine $\e{G}$, first we provide a family $\mathcal{U}$ of vertex covers; then, for every vertex cover $U$ of $\mathcal{U}$ and every edge $e$ of $G$, we exhibit a defense function that shifts $U$ to another vertex cover of $\mathcal{U}$ and protects $e$, thus showing that $\mathcal{U}$ is an eternal vertex cover of $G$; finally, we show that no eternal vertex cover class of $G$ can have size strictly smaller than $\mathcal{U}$.

\subsection{Series-Parallel Graphs}

Let the graphs considered from now on have two distinguished vertices, $s$ and $t$, called {\em source} and {\em sink}, respectively.

Let be given two vertex-disjoint graphs $G_1$ and $G_2$, with sources and sinks $s_1$ and $t_1$, $s_2$ and $t_2$, respectively. The {\em series composition} of $G_1$ and $G_2$ is a graph $G$ obtained by merging $t_1$ with $s_2$, and its distinguished vertices are $s=s_1$ and $t=t_2$. The {\em parallel decomposition} of $G_1$ and $G_2$ is a graph $G$ obtained by merging $s_1$ with $s_2$ into distinguished vertex $s$ and $t_1$ with $t_2$ into distinguished vertex $t$.

Series-parallel graphs can be constructed recursively by series and parallel compositions:

\begin{definition}\cite{Du65}
A {\em series-parallel} graph $G$ is a graph with two distinguished vertices $s$ and $t$ that is either a single edge or can be recursively constructed by either {\em series} or {\em parallel} composition of two series-parallel graphs. 
\end{definition}

Due to the recursive nature of \sep\ graphs, it is natural to introduce a decomposition that mimics the construction of these graphs.

\begin{definition}\cite{ValdesTL82}
The {\em SP-decomposition tree} of a series-parallel graph $G$ is a rooted binary tree $T$ in which each leaf corresponds to an edge of $G$, and every internal node of $T$ is labeled as either a parallel or series node; starting from its edges, that are series-parallel graphs, the series-parallel subgraph associated to a subtree of $T$ rooted at a node $v$ is the composition indicated by the label of $v$ of the two series-parallel subgraphs associated to the children of $v$; $G$ is the series-parallel graph associated to the root of $T$.
\end{definition}

For an extended and more formal treatment of \sep\ graphs and SP-decompositions, the reader can refer {\em e.g.} to~\cite{DissauxDNN23}.

\subsection{Melon Graphs}

The main result of this paper, described and proved in~\Cref{sec:melon}, deals with a subclass of \sep\ graphs:

\begin{definition}
For any integer $k\geq 1$, given $k$ internally vertex-disjoint paths $P^{(1)}, \ldots , P^{(k)}$ whose extremes are their distinguished vertices, a graph $G$ is a {\em $k$-melon graph} if  $G$ can be constructed by the parallel composition of $P^{(1)}, \ldots , P^{(k)}$. A graph $G$ is a {\em melon graph} if it is a~$k$-melon graph, for some $k\geq 1$.
\end{definition}

In particular, paths are 1-melon graphs and cycles are 2-melon graphs. Note that for every $k \neq 2$, in every $k$-melon graph $G$, $s$ and $t$ are the only two vertices of $G$ not having degree two.

Melon graphs have already been studied in different research works: with respect to the computation of the treelength~\cite{DissauxDNN23}, for the understanding of the treewidth on hereditary graph classes~\cite{AbrishamiCV22,SintiariT21} and in high-energy physics representing tensor models~\cite{BGRR11}.

Let $G$ be a $k$-melon graph for some $k\geq 1$,  constituted by paths $P^{(1)}, \ldots , P^{(k)}$. Denote with $\mathcal{P}(G)$ (or simply $\mathcal{P}$ if there is no risk for confusion) the set of paths $P^{(1)}, \ldots, P^{(k)}$ used to obtain $G$. A path is said to be either an {\em odd} or an {\em even path}, depending on the parity of its length. Let $\mathcal{P}=\po \cup \pe$ be the partition of $\mathcal{P}$ into odd and even paths.

\begin{definition}
A $k$-melon graph $G$ obtained by the paths of $\mathcal{P}=\pe \cup \po$ is an {\em even} (respectively {\em odd}) {\em $k$-melon graph} if $\po=\emptyset$ (respectively $\pe=\emptyset$), and it is {\em mixed} otherwise. 
\end{definition}

In what follows, we indicate by $P_e$ a path in $\pe$ and by $P_o$ a path in $\po$, in order to easily have in mind its parity when confusion may arise.

\section{Eternal Vertex Cover on Melon Graphs}\label{sec:melon}

In this section, we provide the eternal vertex cover number of melon graphs, and our proofs are constructive. More in detail, the main result of this paper is the following:

\begin{reptheorem}{thm:melons}
\EVC~is linear-time solvable for melon graphs.
\end{reptheorem}

\medskip

In the following, we will prove~\Cref{thm:melons} separately on even, odd, and mixed melon graphs. Note that it is very well-known how to solve \EVC~on $1$- and $2$-melon graphs, {\em i.e.}, paths and cycles~\cite{KlostermeyerM09}; hence, in the rest of this work, we only consider $k$-melon graphs with $k\geq 3$.

\subsection{Odd Melon Graphs}\label{ss:odd}

In order to prove~\Cref{thm:melons} on odd melon graphs, we exploit a result from~\cite{MisraN23}, for which we need some additional definitions.

A {\em matching} $M$ of $G$ is a subset of vertex-disjoint edges of $G$. Moreover, if $G$ is bipartite and  $V=A \cup B$, a matching $M$ is {\em perfect} if $|M|=\min \{ |A|, |B| \}$; clearly, if $|A|=|B|$, every vertex is adjacent to some edge of a perfect matching.

Given an odd path $P$ of length $\ell$, we can recognize on it a maximum matching of cardinality $(\ell+1)/2$ and a maximal matching of cardinality $(\ell-1)/2$; the first one is perfect, and hence we call it {\em odd-perfect}, while the second leaves the two endpoints of the path out of the matching, and so we denote it as {\em odd-imperfect}. It is easy to see that every edge of $P$ belongs to exactly one of these two matchings.

In support of our goal of building constructive proofs, we say that a bipartite graph $G$ is {\em elementary} if it is connected and every edge belongs to some perfect matching of $G$~\cite{Ga64}. 

The following result connects elementary graphs and their eternal vertex cover number:

\begin{lemma}\cite{MisraN23}\label{lem:mini}
Let $G$ be an elementary graph, then $\e{G}=\m{G}=|V(G)|/2$.
\end{lemma}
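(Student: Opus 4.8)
\medskip

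\noindent\textbf{Proof plan.} This statement is quoted from~\cite{MisraN23}, so one may simply invoke it; still, here is how I would argue it directly. The plan is to treat the two equalities separately, establishing \(\m{G}=|V(G)|/2\) by a standard König-type argument and \(\e{G}=|V(G)|/2\) by sandwiching \(\e{G}\) between \(\m{G}\) from below and the size of an explicit eternal vertex cover class from above.

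First I would fix notation: since \(G\) is elementary it is connected and bipartite, say \(G=(A\cup B,E)\), and it has a perfect matching saturating every vertex, so \(|A|=|B|=|V(G)|/2\). For the vertex cover number, the \(|V(G)|/2\) edges of such a perfect matching are pairwise disjoint, hence every vertex cover contains an endpoint of each and \(\m{G}\ge|V(G)|/2\); on the other hand \(A\) is itself a vertex cover of size \(|A|=|V(G)|/2\), so \(\m{G}=|V(G)|/2\). Using the known inequality \(\m{G}\le\e{G}\), this already gives \(\e{G}\ge|V(G)|/2\).

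For the reverse inequality I would exhibit an eternal vertex cover class of size \(|V(G)|/2\): take \(\mathcal{U}=\{A,B\}\). Both \(A\) and \(B\) are vertex covers of \(G\) of the required size, so it only remains to give a defense, inside \(\mathcal{U}\), against every attack. Suppose the configuration is \(A\) and the attacker picks the single-guarded edge \(e=ab\) with \(a\in A\) and \(b\in B\). This is the step that truly uses the hypothesis: because \(G\) is elementary, \(e\) lies in some perfect matching \(M_e\) of \(G\). Define \(\phi\colon A\to V\) by sending each \(a'\in A\) to its \(M_e\)-partner in \(B\); then \(\phi\) is a bijection from \(A\) onto \(B\), each guard moves along an edge of \(M_e\) (hence into its closed neighborhood), and \(\phi(a)=b\) because \(e\in M_e\). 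Thus the attack on \(e\) is protected and the configuration shifts from \(A\) to \(B\in\mathcal{U}\); the symmetric argument handles a configuration equal to \(B\). Therefore \(\mathcal{U}\) is an eternal vertex cover class of size \(|V(G)|/2\), so \(\e{G}\le|V(G)|/2\), and combining the two bounds yields \(\e{G}=\m{G}=|V(G)|/2\).

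The argument is short, and no serious obstacle is expected: the only place that needs the full strength of \emph{elementary} (rather than merely "has a perfect matching") is the defense step, where the perfect matching is required to pass through the \emph{particular} attacked edge. The main care is purely bookkeeping there — checking that the shift induced by \(M_e\) is a legitimate one-to-one defense function whose image is again a configuration of \(\mathcal{U}\).
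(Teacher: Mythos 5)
Your argument is correct, and it is the standard proof of this fact: the paper itself does not prove the statement (it is imported from~\cite{MisraN23}), but the defense you construct --- the class $\mathcal{U}=\{A,B\}$ together with the shift along a perfect matching $M_e$ through the attacked edge, plus the lower bound $\e{G}\geq \m{G}=|V(G)|/2$ from the disjoint matching edges --- is exactly the technique the paper deploys when it applies this lemma to odd melon graphs in~\Cref{lem:odd}. The only point to be explicit about is that ``elementary'' is taken in the usual sense (every edge lies in a matching saturating \emph{all} vertices, so $|A|=|B|$), which you correctly assume; with that reading, your bookkeeping (the $M_e$-shift is a bijection $A\to B$ within closed neighborhoods, protecting $e$) is sound.
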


We exploit the previous lemma to prove our results on odd melon graphs. Preliminarily, observe that every odd melon graph $G$ is bipartite, so for the rest of this subsection, we assume that $G=(A\cup B,E)$. Since every path has an odd length, then one between $s$ and $t$ belongs to $A$ while the other belongs to $B$; without loss of generality, we assume $s\in A$ and $t\in B$.

\begin{lemma}\label{lem:elem}
Every odd melon graph is elementary.    
\end{lemma}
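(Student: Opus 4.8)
The plan is to verify the two defining properties of an elementary graph directly from the melon structure. Connectivity is immediate, since every path $P^{(i)}$ passes through both $s$ and $t$. So the real content is to show that every edge of $G$ lies in some perfect matching. First I would record what perfect matchings of $G$ look like. Writing $\ell_i$ for the length of $P^{(i)}$, the path $P^{(i)}$ contributes $\ell_i-1$ internal vertices which, because $\ell_i$ is odd, split evenly between $A$ and $B$; adding $s\in A$ and $t\in B$ yields $|A|=|B|=1+\sum_i(\ell_i-1)/2=|V(G)|/2$. Hence a perfect matching of $G$ is exactly one saturating every vertex, and it has $|V(G)|/2$ edges. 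Next I would use the facts recalled just before~\Cref{lem:mini}: on each odd path $P^{(i)}$ there is an odd-perfect matching $M_i$, saturating all vertices of $P^{(i)}$ including $s$ and $t$, and an odd-imperfect matching $N_i$, saturating all internal vertices of $P^{(i)}$ but leaving $s$ and $t$ uncovered, and every edge of $P^{(i)}$ belongs to exactly one of $M_i$, $N_i$.

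The heart of the argument is then a two-case analysis. Fix an edge $e$ of $G$ and let $P^{(i)}$ be a path through it. If $e\in M_i$, I take $M:=M_i\cup\bigcup_{j\ne i}N_j$: the $N_j$ are pairwise vertex-disjoint, disjoint from the internal vertices of $P^{(i)}$, and avoid $s$ and $t$, so $M$ saturates $s$ and $t$ via $M_i$, the internal vertices of $P^{(i)}$ via $M_i$, and the internal vertices of each $P^{(j)}$ via $N_j$; thus $M$ is a perfect matching containing $e$. If instead $e\in N_i$, I pick any other path $P^{(j)}$ (one exists since $k\ge 3$) and take $M:=N_i\cup M_j\cup\bigcup_{m\ne i,j}N_m$: now $M_j$ handles $s$, $t$ and the internal vertices of $P^{(j)}$, while $N_i$ and the remaining $N_m$ handle the internal vertices of the other paths, again giving a perfect matching through $e$. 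Since every edge of $G$ belongs to exactly one of the $M_i$, $N_i$, every edge is covered, and $G$ is elementary.

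The only delicate point, and the one where the oddness of all paths is genuinely used, is the bookkeeping that the assembled pieces overlap precisely at $s$ and $t$ and cover every other vertex exactly once: if some path had even length, its internal vertices would not split evenly between $A$ and $B$, and the ``perfect'' and ``imperfect'' fragments on that path could not be glued to the fragments on the other paths without leaving a vertex unsaturated. Beyond checking this parity/incidence accounting, the proof is routine, so I do not anticipate any real obstacle.
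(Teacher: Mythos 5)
Your proof is correct and follows essentially the same route as the paper: the same two-case construction assembling the odd-perfect matching of the path containing $e$ (or of an auxiliary path when $e$ lies in the odd-imperfect one) with the odd-imperfect matchings of the remaining paths. Your extra bookkeeping on $|A|=|B|$ and vertex saturation just makes explicit what the paper summarizes as the ``alternating nature'' of $M_e$.
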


\begin{proof}
Every melon graph is connected by definition, so it remains to prove that any edge $e$ of $G$ belongs to a perfect matching $M_e$ that we construct as follows. 

For each path of $\mathcal{P}(G)$, consider its odd-perfect and odd-imperfect matchings. Without loss of generality, let $e \in P^{(1)}$ (otherwise we can rename the paths in $\mathcal{P}$). If $e$ belongs to the odd-perfect matching of $P^{(1)}$ (see the red edge in~\Cref{fig:cboe}.a),  then put in $M_e$ all the edges of this odd-perfect matching (including $e$) and all the edges lying in the odd-imperfect matchings of all the other paths. If, vice versa, $e$ belongs to the odd-imperfect matching of $P^{(1)}$ (see the red edge in~\Cref{fig:cboe}.b), then put in $M_e$ all the edges of the odd-perfect matching of $P^{(2)}$ and all the edges lying on the odd-imperfect matchings of all the other paths (including $e$). 

$M_e$ contains $e$ and is a perfect matching indeed, due to the alternating nature of $M_e$, for every vertex $v$ of $G$, there exists exactly one edge of $M_e$ that contains $v$.
\end{proof}

Note that each odd melon is bipartite, and it holds that $|A|=|B|$ because, for any path $P \in \mathcal{P}$, $|A \cap P|=|B \cap P|$. Moreover, $A$ and $B$ are two vertex covers of $G$. This observation is exploited to prove the following result.

\begin{theorem}\label{lem:odd}
Let $G=(A\cup B,E)$ be an odd $k$-melon graph. It holds that $\e{G}=\m{G}$, and the family $\mathcal{U}=\{A, B\}$ is a minimum eternal vertex cover class of $G$.
\end{theorem}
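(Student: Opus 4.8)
The plan is to combine the two preceding lemmas with a direct construction of defenses for the family $\mathcal{U}=\{A,B\}$.

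First, the equality $\e{G}=\m{G}$ is immediate: by \Cref{lem:elem} the graph $G$ is elementary, so \Cref{lem:mini} yields $\e{G}=\m{G}=|V(G)|/2$. As observed just before the statement, $|A|=|B|=|V(G)|/2=\m{G}$, and $A$ and $B$ are vertex covers of $G$ (being the two sides of the bipartition); hence $\mathcal{U}$ is a family of vertex covers of common cardinality $\m{G}$, which is the first requirement in the definition of an eternal vertex cover class.

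Second, I would show that the defender can always answer an attack while staying inside $\mathcal{U}$. By the symmetry between $A$ and $B$, assume the current configuration is $A$ and the attacker selects a \sg\ edge $e=vw$ with $v\in A$ and $w\in B$. Apply \Cref{lem:elem}: $e$ lies in some perfect matching $M_e$ of $G$. Since $|A|=|B|$, the matching $M_e$ saturates every vertex of $G$, so assigning to each vertex of $A$ its $M_e$-partner in $B$ defines a bijection $\phi\colon A\to V$ with $\phi(A)=B$, $\phi(v)=w$, and $\phi(u)\in N(u)\subseteq N[u]$ for every $u\in A$. Thus $\phi$ is a legitimate defense: it protects $e$ and shifts the configuration $A$ to the configuration $B\in\mathcal{U}$. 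The case in which the current configuration is $B$ is handled identically, with the roles of $A$ and $B$ interchanged (the guard on $w$ crosses $e\in M_e$ to $v$). This establishes that $\mathcal{U}$ is an eternal vertex cover class.

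Finally, minimality comes for free: every configuration of an eternal vertex cover class is in particular a vertex cover of $G$, so every such class has size at least $\m{G}$; since $\mathcal{U}$ has size exactly $\m{G}=\e{G}$, it is a minimum eternal vertex cover class. The only point that needs care is verifying that the matching-based map $\phi$ is a genuine defense — one-to-one, confined to closed neighborhoods, and landing exactly on the opposite side of the bipartition — but this is precisely what elementarity (via \Cref{lem:elem}) together with $|A|=|B|$ guarantees, so I do not expect any real obstacle beyond this bookkeeping.
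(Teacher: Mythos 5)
Your proposal is correct and follows essentially the same route as the paper: invoke \Cref{lem:elem} to get a perfect matching $M_e$ through the attacked edge, shift every guard along its $M_e$-edge to swap the configuration between $A$ and $B$, and use \Cref{lem:mini} together with $|A|=|B|$ to settle the count $\e{G}=\m{G}$. Your additional bookkeeping (the bijectivity of $\phi$ and the generic lower bound $\e{G}\geq\m{G}$ for minimality) is just a more explicit rendering of what the paper leaves implicit.
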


\begin{proof}
Consider an edge $e$ of $G$. Since $G$ is elementary by~\Cref{lem:elem}, there exists a perfect matching $M_e$ of $G$ that contains $e$, and $M_e$ can be found following the proof of~\Cref{lem:elem}. 

Whenever attacked, the edge $e$ can always be protected. Indeed, suppose first that the guards are positioned on the vertices of $A$; then, to protect $e$, it is enough that every guard shifts through its incident edge in $M_e$, {\em i.e.}, for each $a\in A$, $\phi(a)=b$, where $ab$ is the unique edge of $M_e$ incident to $a$. The case in which the guards are positioned on the vertices of $B$ is done symmetrically.
\end{proof}

\subsection{Even Melon Graphs}

Let $G$ be an even melon graph. Although it is easy to see that $G$ is bipartite, we can not exploit a strategy similar to the proof of~\Cref{lem:odd} because for an even $k$-melon graph it holds that the two bipartitions have the same cardinality if and only if $k=2$. Hence, we follow another approach that needs some further definitions.

Let $G$ be an even melon graph and $U \subseteq V$ be a subset of vertices. Let $P$ be a path in $\mathcal{P}$; in view of its parity, let its length equal to ${2m}$, for some $m\geq 1$.

We distinguish the two following behaviors of $P$ with respect to $U$: we say that $P$ is an {\em \inte} with respect to $U$ (or simply an \inte, if $U$ is clear from the context) if $U \cap V(P)=\{v_{2j}~|~j\in \{0,\ldots,m\}\}$ and similarly, that $P$ is an {\em \exte} with respect to $U$ (or simply an \exte, if $U$ is clear from the context) if $U\cap V(P)=\{v_{2j+1}~|~j\in \{0,\ldots,m-1\}\}\cup \{s,t\}$. Intuitively, $s$ and $t$ belong both to internal and to external paths; moreover, the inner vertices of an \exte~alternately belong to $U$, starting with the neighbors of $s$ and of $t$, while the neighbors of $s$ and of $t$ do not belong to $U$ in an \inte. As an example, in~\Cref{fig:cboe}.c, the three leftmost paths and the rightmost one are internal, while the remaining one is external. 

\begin{lemma}\label{le:ciclo_pari}
Let $G$ be an even $2$-melon graph with paths $P$ and $P'$, source $s$ and sink $t$. Moreover, let $U$ be a set of vertices such that $P$ is internal and $P'$ is external with respect to $U$, and let $U'$ be a set of vertices such that $P'$ is internal and $P$ is external with respect to $U'$. Then it is possible to defend $G$ from an attack on any \sg~edge by shifting $U$ to $U'$ and vice versa.
\end{lemma}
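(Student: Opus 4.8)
We have an even $2$-melon graph $G$ — essentially a single even-length cycle — formed by two even paths $P$ and $P'$ sharing the endpoints $s,t$. Write $P=(s=v_0,v_1,\dots,v_{2m}=t)$ and $P'=(s=w_0,w_1,\dots,w_{2m'}=t)$. The vertex set $U$ makes $P$ internal (so $U\cap V(P)=\{v_0,v_2,\dots,v_{2m}\}$, which includes $s,t$) and $P'$ external (so $U\cap V(P')=\{s,t,w_1,w_3,\dots,w_{2m'-1}\}$); $U'$ swaps the roles. Both $U$ and $U'$ are vertex covers of $G$: on each even path the two alternating patterns each hit every edge, and on the boundary $s$ and $t$ lie in both sets. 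The claim is that from any single-guarded edge attack we can shift $U\to U'$ (and symmetrically $U'\to U$) protecting that edge.

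**The plan.** I would fix the attack on a single-guarded edge $e$ and split into cases by which path $e$ lies on. The key observation is that $U$ and $U'$ differ by "rotating" the guards one step around the cycle: on $P$ the guards move from even-indexed to odd-indexed vertices, on $P'$ from odd-indexed to even-indexed vertices, and $s,t$ stay put. More precisely, define the candidate defense $\phi$ by $\phi(v_{2j})=v_{2j-1}$ for $j\in\{1,\dots,m\}$ (shifting the guard at each even interior/endpoint vertex of $P$ toward $s$-side neighbor), $\phi(s)=s$ if we keep $s$... — wait, we must be careful: $s=v_0\in U$ and $s\in U'$, so the guard on $s$ may stay, but then $v_1$ would receive two guards (from $v_0$ and from $v_2$). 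So the correct rotation is: on the whole cycle $C=P\cup P'$, shift every guard one step in a consistent cyclic direction. The guard on $s$ moves along $P$ to $v_1$; the guards $v_2,v_4,\dots,v_{2m}=t$ move to $v_1,v_3,\dots,v_{2m-1}$; then the guard formerly on $t$ is now at $v_{2m-1}$, and we also need a guard back on $t$ — that comes from $w_{2m'-1}\in U$ moving to $w_{2m'}=t$; and the remaining guards $w_{2m'-1},w_{2m'-3},\dots,w_1$ on $P'$ move to $w_{2m'},w_{2m'-2},\dots,w_2$, with finally the guard at $w_1$... and the guard at $s$ on the $P'$ side — but $s$ only hosts one guard. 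So the clean description is: the guards occupy exactly the vertices of $U$, which is one of the two alternating vertex covers of the even cycle $C$, and $U'$ is the other one; the map $\phi$ rotates all guards by one edge around $C$ in the direction that takes $U$ to $U'$. This is a well-defined one-to-one map with $\phi(u)\in N[u]$ for all $u$ (in fact $\phi(u)$ is always a genuine neighbor, an edge-move), and its image is exactly $U'$.

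**Protecting the attacked edge.** Given the single-guarded attacked edge $e=xy$ with, say, $x\in U$, $y\notin U$: on the even cycle the two alternating covers together use every edge exactly once as a "shift edge" in one of the two rotation directions. If $e$ is a shift edge of the $U\to U'$ rotation, the rotation $\phi$ above already crosses $e$, so we are done. If instead $e$ is a shift edge of the $U'\to U$ rotation (the opposite direction), then we cannot use the full opposite rotation since that maps $U$ to itself... no — the opposite rotation maps $U$ to the cover reached by rotating the other way, which on an even cycle is again $U'$? No: rotating an alternating cover of an even cycle by one step in either direction gives the *same* other alternating cover. Hence both rotation directions map $U$ to $U'$, and every edge of $C$ is a shift edge of exactly one of the two rotations; so whichever edge is attacked, one of the two rotations $U\to U'$ crosses it. I would therefore define two defenses $\phi^+,\phi^-$ (clockwise and counterclockwise one-step rotations of the guards around $C$), verify each is a valid defense with image $U'$, observe that $E(C)=E(G)$ partitions into the shift-edges of $\phi^+$ and those of $\phi^-$, and conclude that the attacked single-guarded edge is protected by one of them. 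The symmetric statement $U'\to U$ is identical by relabeling.

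**Main obstacle.** The only delicacy is bookkeeping at the shared vertices $s$ and $t$: a naive "keep $s,t$ fixed, shift everything else" is not one-to-one (as noted, $v_1$ would get two guards). The fix — treating $G$ as the even cycle $C$ and using the genuine one-step cyclic rotation, under which $s$ and $t$ do move but the *set* still returns the complementary alternating cover — is what makes $\phi$ a bijection. Once that is set up correctly, checking $\phi(u)\in N[u]$, injectivity, $\phi(U)=U'$, and the edge-partition claim are all routine verifications on an even cycle, which I would state but not belabor. $\qed$
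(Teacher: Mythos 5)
There is a genuine gap, and it sits at the heart of your argument: you assert that $U$ ``is one of the two alternating vertex covers of the even cycle $C$'' and that $U'$ is the other, so that a one-step cyclic rotation of all guards (in either direction) carries $U$ to $U'$. This contradicts the setup you yourself wrote down. Since the external path contributes $\{s,t,w_1,w_3,\dots,w_{2m'-1}\}$ to $U$, the set $U$ contains the two adjacent pairs $\{s,w_{2m'-1}\}$ and $\{t,w_1\}$; it has $m+m'+1$ vertices, one more than an alternating cover of the cycle, and the two external-path edges incident to $s$ and $t$ are doubly guarded. Consequently the key fact you invoke (``rotating an alternating cover of an even cycle one step in either direction gives the same other alternating cover'') does not apply, and indeed neither rotation maps $U$ to $U'$. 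A minimal example: take both paths of length $2$, so $G=C_4$ with $P=(s,u_1,t)$ and $P'=(s,w_1,t)$; then $U=\{s,t,w_1\}$ and $U'=\{s,t,u_1\}$, while the two one-step rotations of $U$ give $\{t,u_1,w_1\}$ and $\{s,u_1,w_1\}$, neither of which is $U'$ (nor of internal/external form, which matters because the later theorems reuse this lemma within the family of such configurations). So your $\phi^{+},\phi^{-}$ are legitimate one-move defenses of the cycle, but they do not establish the statement, which requires shifting $U$ to $U'$.

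The correct repair is not a uniform rotation but the asymmetric move the paper uses: keep the guard on $s$ (or on $t$, depending on the direction forced by the attacked edge) fixed, advance every guard on the internal path one step toward that terminal (so the guard on the other terminal vacates it along the internal path), and move every interior guard of the external path one step toward the other terminal, so that the external guard adjacent to it refills the vacated terminal. This map is injective, every guard moves within its closed neighborhood, its image is exactly $U'$, and by choosing the direction according to which endpoint of the attacked single-guarded edge is unguarded it crosses the attacked edge. You had in fact spotted the double-occupancy problem at the terminals, but resolved it by appealing to the false premise that $U$ is an alternating cover; with the premise gone, the resolution, and hence the proof, collapses.
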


\begin{proof}[Proof of~\Cref{le:ciclo_pari}]
Let $e=zw$ be an edge of $G$. Intuitively, to protect $e$, we move the guards to turn $P$ into an \exte~and $P'$ into an \inte~following the direction of the forced shift of the guard on $e$. Let $e=zw$ be an edge of $G$. Since $U$ is a vertex cover and $e$ is \sg, it is not restrictive to assume that $z\in U$ and $w\notin U$. Call $u_0,\ldots,u_{2m}$ the vertices of $P$ and $v_0,\ldots,v_{2m'}$ the vertices of $P'$, for some $m, m'\geq 1$, and let $u_0=v_0=t$ and $u_{2m}=v_{2m'}=s$. Then, to protect $e$, we move the guards to turn $P$ into an \exte~and $P'$ into an \inte~following the forced shift of the guard from $z$ to $w$.

In particular, assume that $e$ is either an edge of $P$ and $z=u_{2j}$ and $w=u_{2j+1}$ for some $0 \leq j<m$, or an edge of $P'$ and $z=v_{2j+1}$ and $w=v_{2j}$ for some $0 \leq j<m'$. Then, to protect $e$, we use the following defense function $\phi$:
\begin{itemize}
\item $\phi(u_{2i})=u_{2i+1}$ for $i=0, \ldots , m-1$;
\item $\phi(v_{2i+1})=v_{2i}$ for $i=0, \ldots , m'-1$;
\item $\phi(s)=s$.
\end{itemize}
It is clear that $z$ shifts to $w$ and $C$ to $C'$. Due to symmetry, a similar defense function defends the attack of $e$ when it is either an edge of $P$ and $z=u_{2j}$ and $w=u_{2j-1}$ for some $0 < j\leq m$, or an edge of $P'$ and $z=v_{2j-1}$ and $w=v_{2j}$ for some $0 <j<m'$. 
\end{proof}

Now, given a $k$-even melon graph, for each fixed $i\in [k]$, we denote with $U_i$ the vertex set such that the path $P^{(i)}$ is an \exte~w.r.t. $U_i$ and the path $P^{(j)}$ is an \inte~w.r.t. $U_i$, for every $j\in [k]$ and $j\neq i$. In the following theorem, we exploit~\Cref{le:ciclo_pari} to defend any even $k$-melon with $k \geq 3$ with its guards on the vertices of $U_i$ by considering the even 2-melon graph induced by $P^{(i)}$ and one of the \inte s w.r.t. $U_i$.

\begin{theorem}\label{lem:evenmelon}
Let $G$ be an even $k$-melon graph, for some $k\geq 3$. It holds that $\e{G}=\m{G} +1$, and the family $\mathcal{U}=\{U_i~|~i\in [k]\}$ is a minimum eternal vertex cover class of $G$, where the sets $U_i$ are defined above.
\end{theorem}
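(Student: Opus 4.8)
The plan is to split the argument into two parts: first exhibiting a defense strategy showing $\e{G}\le \m{G}+1$ with $\mathcal{U}=\{U_i\mid i\in[k]\}$ as an eternal vertex cover class, and then proving the matching lower bound $\e{G}\ge \m{G}+1$.

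For the upper bound, I would first check that each $U_i$ is indeed a vertex cover of $G$ of size $\m{G}+1$: on the external path $P^{(i)}$ the guards occupy the neighbors of $s$ and $t$ and alternate inward, so every edge of $P^{(i)}$ is covered; on each internal path $P^{(j)}$ the guards sit on the even-indexed vertices, again covering every edge; and $s,t$ are covered since they lie in every $U_i$. A direct count gives $|U_i|=\m{G}+1$ (one extra guard compared to the minimum vertex cover, which puts all interior even vertices of every path plus exactly one of $s,t$). The heart of the upper bound is to show $\mathcal{U}$ is closed under defense. Given a single-guarded attacked edge $e$, it lies on some path $P^{(r)}$. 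The key observation is that $e$'s endpoints are in exactly one of the $U_i$'s we need to start from: if $P^{(r)}$ is currently internal with respect to the active configuration $U_i$ (i.e.\ $r\neq i$), then I route the guard across $e$ toward whichever of $s,t$ it points, and invoke \Cref{le:ciclo_pari} applied to the even $2$-melon graph induced by $P^{(i)}$ and $P^{(r)}$: this lemma lets me shift that $2$-melon's configuration so that $P^{(r)}$ becomes external and $P^{(i)}$ becomes internal, while $s\mapsto s$ (or $t\mapsto t$) is fixed and all other paths are untouched — the result is exactly $U_r\in\mathcal{U}$. If instead $e$ lies on the currently external path $P^{(i)}$, I pick any other path $P^{(j)}$, apply \Cref{le:ciclo_pari} to the $2$-melon induced by $P^{(i)}$ and $P^{(j)}$ in the reverse direction, turning $P^{(i)}$ internal and $P^{(j)}$ external, landing in $U_j$. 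Because \Cref{le:ciclo_pari} guarantees the endpoints $s,t$ are fixed and only the two chosen paths move, the remaining $k-2$ internal paths stay internal and the new configuration is genuinely some $U_\ell\in\mathcal{U}$; this handles every possible attack.

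For the lower bound, I would argue that $\m{G}$ guards never suffice. Any minimum vertex cover of an even $k$-melon with $k\ge 3$ is essentially forced: on each path it must take the interior even-indexed vertices, and it takes exactly one of $s,t$ — so every minimum vertex cover $W$ looks like "all paths internal except for the one path carrying the vertex it skips," but in fact with $|W|=\m{G}$ one cannot even make all interior vertices even-indexed on every path and still cover the edges at both $s$ and $t$ unless one path is handled differently. The cleaner route, which I expect the authors take, is: suppose for contradiction there is an eternal vertex cover class of size $\m{G}$; take any configuration $W$ in it and attack the single-guarded edge incident to $s$ (or $t$) on a suitable path; show that the forced shift produces a set that is no longer a vertex cover — some edge on another path (or at the opposite terminal) becomes uncovered because there is no "slack" guard to compensate. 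One makes this precise by a parity/counting argument on each path: a vertex cover of an even path of length $2m$ using only $m$ vertices is unique up to the choice of whether it includes the endpoints, and with exactly $\m{G}$ guards total every path is pinned to its minimum local cover, leaving no guard free to travel, so the configuration cannot respond to an attack that forces a guard off a terminal. This contradiction gives $\e{G}\ge\m{G}+1$, and combined with the upper bound yields equality.

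The main obstacle I anticipate is the lower bound rather than the defense strategy: one must show rigorously that $\m{G}$ guards are genuinely inflexible — that every minimum vertex cover is so tightly determined on each of the $k\ge 3$ paths that no legal shift both protects a terminal-incident edge and keeps a vertex cover. The case $k\ge 3$ is essential here (for $k=2$, i.e.\ even cycles, $\m{G}$ guards do suffice by \Cref{le:ciclo_pari}), so the proof must exploit that a third path creates an edge at $s$ or $t$ that cannot be simultaneously re-covered after the forced move. The upper bound, by contrast, is a relatively mechanical bookkeeping exercise once \Cref{le:ciclo_pari} is in hand, since that lemma already packages the only non-trivial shifting we need.
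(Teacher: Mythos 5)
Your upper bound is exactly the paper's argument: each $U_i$ is a vertex cover of size $\m{G}+1$, and an attack is handled by applying \Cref{le:ciclo_pari} to the even $2$-melon induced by the external path $P^{(i)}$ and the path carrying the attacked edge (or an arbitrary internal path when the attack is on $P^{(i)}$ itself), extended by the identity elsewhere; this lands in another configuration $U_\ell\in\mathcal{U}$, as in the paper.

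The lower bound is where your sketch needs repair. Your structural description of minimum vertex covers is incorrect: for an even $k$-melon with $k\geq 3$ the unique minimum vertex cover is the set $U$ with respect to which every path is internal, and it contains \emph{both} $s$ and $t$ (omitting a terminal forces all $k\geq 3$ of its neighbours into the cover, which is strictly worse); it does not ``take exactly one of $s,t$'', nor is a minimum cover ``unique up to the choice of whether it includes the endpoints''. Once uniqueness of $U$ is established, the paper's lower bound is immediate and much shorter than the attack-and-cascade analysis you anticipate: an eternal vertex cover class of size $\m{G}$ could only consist of the single configuration $U$, but defending an attack on any single-guarded edge forces a guard onto a vertex outside $U$, yielding a vertex cover of size $\m{G}$ different from $U$ --- impossible by uniqueness. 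Your fallback contradiction argument is in the same spirit and can be made to work, but as written it leans on the wrong structural claims, so it should be replaced by (or reduced to) the uniqueness statement above.
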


\begin{proof}
First, observe that, fixed any $i\in [k]$, the set $U_i$ is a vertex cover of $G$ with $\m{G}+1$~elements. Indeed, due to the alternating nature of the definition, every edge of $G$ contains exactly one vertex of $U_i$ with the exception of the two edges of the external path $P^{(i)}$ which are incident to $s$ and $t$, whose both endpoints are vertices of $U_i$. 

Consider now the set $U$ of vertices of $G$ such that every path $P\in \mathcal{P}$ is internal w.r.t. $U$. Clearly, since no \exte s are in $U$, it holds that $|U_i|=|U|+1$, for every $i\in [k]$. Moreover, $U$ is a vertex cover and it is of minimum cardinality because every edge is incident to exactly one vertex in $U$. Finally, $U$ is the unique minimum vertex cover of $G$, and so it cannot be a configuration of a minimum eternal vertex cover class. It follows that $\e{G}$ is at least $\m{G}+1$. Then, proving that $\mathcal{U}$ is an eternal vertex cover class of $G$ also shows that $\mathcal{U}$ is minimum.

Let $U_i$ be any configuration of $\mathcal{U}$ and let $e$ be the attacked edge of $G$. Let $P^{(j)}$ be the path which contains $e$. If $j=i$ (see~\Cref{fig:cboe}.c), let $P^{(k)}$ be any internal path of $\mathcal{P}$ w.r.t. $U_i$ and let $G'$ be the subgraph of $G$ induced by the vertices of $P^{(i)}$ and $P^{(k)}$. If $j\neq i$ (see~\Cref{fig:cboe}.d), let $G'$ be the subgraph of $G$ induced by the vertices of $P^{(i)}$ and $P^{(j)}$. Observe that $G'$ is an even $2$-melon graph; calling $\phi'$ the defense function of~\Cref{le:ciclo_pari} to defend $G'$ from the attack on $e$, to protect $e$ in $G$ we define the defense function $\phi$ as follows: $\phi(v)=\phi'(v)$ if $v$ is a vertex of $G'$ and $\phi(v)=v$ otherwise. It is easy to see that $\phi$ protects $e$.
\end{proof}

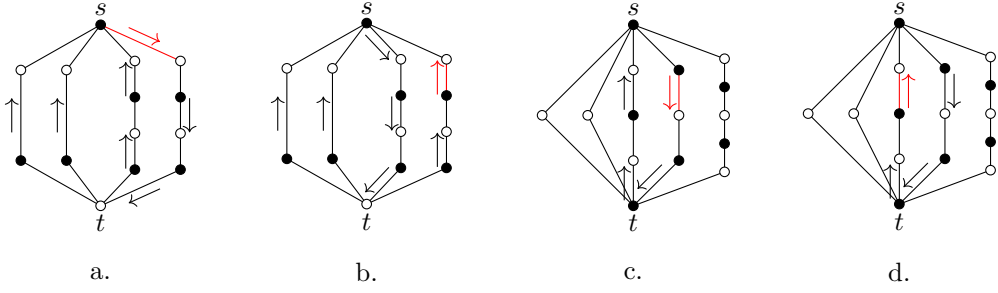
\begin{figure}
\centering
\begin{minipage}{0.25\textwidth}
\begin{tikzpicture}[scale=0.6]
\coordinate (S) at (0,2);\coordinate (T) at (0,-2);\coordinate (A1) at (-1.75,1);\coordinate (A2) at (-1.75,-1);\coordinate (B1) at (-0.75,1);\coordinate (B2) at (-0.75,-1);\coordinate (C1) at (0.75,1.2);\coordinate (C2) at (0.75,0.4);\coordinate (C3) at (0.75,-0.4);\coordinate (C4) at (0.75,-1.2);\coordinate (D1) at (1.75,1.2);\coordinate (D2) at (1.75,0.4);\coordinate (D3) at (1.75,-0.4);\coordinate (D4) at (1.75,-1.2);
\draw[->,color=red](S)to[parallel segment below](D1);\draw[color=red] (S)--(D1);\draw[->](D2)to[parallel segment below](D3);\draw[->](D4)to[parallel segment below](T);\draw[->](C2)to[parallel segment below](C1);\draw[->](C4)to[parallel segment below](C3);\draw[->](A2)to[parallel segment below](A1);\draw[->](B2)to[parallel segment below](B1);
\draw(D2)--(D3)(D4)--(T)(C2)--(C1)(C4)--(C3)(A2)--(A1)(B2)--(B1)(S)--(A1)(A2)--(T)(S)--(B1)(B2)--(T)(S)--(C1)(C2)--(C3)(C4)--(T)(D1)--(D2)(D3)--(D4);
\draw[fill=white] (T) circle [radius=3pt](A1) circle [radius=3pt](B1) circle [radius=3pt](C1) circle [radius=3pt](C3) circle [radius=3pt](D1) circle [radius=3pt](D3) circle [radius=3pt];
\draw[fill=black](S) circle [radius=3pt](A2) circle [radius=3pt](B2) circle [radius=3pt](C2) circle [radius=3pt](D2) circle [radius=3pt](C4) circle [radius=3pt](D4) circle [radius=3pt];
\node[below] at (T) {$t$};\node[above] at (S) {$s$};\node at (0, -3.5) {a.};
\end{tikzpicture}
\end{minipage}%
\begin{minipage}{0.25\textwidth}
\begin{tikzpicture}[scale=0.6]
\coordinate (S) at (0,2);\coordinate (T) at (0,-2);\coordinate (A1) at (-1.75,1);\coordinate (A2) at (-1.75,-1);\coordinate (B1) at (-0.75,1);\coordinate (B2) at (-0.75,-1);\coordinate (C1) at (0.75,1.2);\coordinate (C2) at (0.75,0.4);\coordinate (C3) at (0.75,-0.4);\coordinate (C4) at(0.75,-1.2);\coordinate (D1) at (1.75,1.2);\coordinate (D2) at (1.75,0.4);\coordinate (D3) at (1.75,-0.4);\coordinate (D4) at (1.75,-1.2);
\draw[->,color=red](D2)to[parallel segment below](D1);\draw[color=red] (D2)--(D1);\draw[->](D4)to[parallel segment below](D3);\draw[->](C4)to[parallel segment above](T);\draw[->](C2)to[parallel segment above](C3);\draw[->](S)to[parallel segment above](C1);\draw[->](B2)to[parallel segment below](B1);\draw[->](A2)to[parallel segment below](A1);
\draw[color=red] (D1)--(D2);\draw(D3)--(D4)(S)--(C1)(C2)--(C3)(C4)--(T)(A2)--(A1)(B2)--(B1)(S)--(A1)(A2)--(T)(S)--(B1)(B2)--(T)(S)--(D1)(D2)--(D3)(D4)--(T)(C1)--(C2)(C3)--(C4);
\draw[fill=white] (T) circle [radius=3pt](A1) circle [radius=3pt](B1) circle [radius=3pt](C1) circle [radius=3pt](C3) circle [radius=3pt](D1) circle [radius=3pt](D3) circle [radius=3pt];
\draw[fill=black] (S) circle [radius=3pt](A2) circle [radius=3pt](B2) circle [radius=3pt](C2) circle [radius=3pt](D2) circle [radius=3pt](C4) circle [radius=3pt](D4) circle [radius=3pt];
\node[below] at (T) {$t$};\node[above] at (S) {$s$};\node at (0, -3.5) {b.};
\end{tikzpicture}
\end{minipage}%
\begin{minipage}{0.25\textwidth}
\begin{tikzpicture}[scale=0.6]
\coordinate (S) at (0,2);\coordinate (T) at (0,-2);\coordinate (A1) at (-2,0);\coordinate (B1) at (-1,0);\coordinate (C1) at (0,1);\coordinate (C2) at (0,0);\coordinate (C3) at (0,-1);\coordinate (D1) at (1,1);\coordinate (D2) at (1,0);\coordinate (D3) at (1,-1);\coordinate (E1) at (2,1.25);\coordinate (E2) at (2,0.625);\coordinate (E3) at (2,0);\coordinate (E4) at (2,-0.625);\coordinate (E5) at (2,-1.25);
\draw[->,color=red](D1)to[parallel segment above](D2);\draw[color=red] (D1)--(D2);\draw[->](D3)to[parallel segment above](T);\draw[->,below](T)to[parallel segment below](C3);\draw[->,below](C2)to[parallel segment below](C1);
\draw (D3)--(T)(T)--(E5)(E4)--(E3)(E2)--(E1)(S)--(A1)--(T)(S)--(B1)--(T)(S)--(C1)--(C2)--(C3)--(T)(S)--(D1)(D2)--(D3)(S)--(E1)(E2)--(E3)(E4)--(E5);
\draw[fill=white] (A1) circle [radius=3pt](B1) circle [radius=3pt](C1) circle [radius=3pt](C3) circle [radius=3pt](D2) circle [radius=3pt](E1) circle [radius=3pt](E3) circle [radius=3pt](E5) circle [radius=3pt];
\draw[fill=black](S) circle [radius=3pt](T) circle [radius=3pt](C2) circle [radius=3pt](D1) circle [radius=3pt](D3) circle [radius=3pt](E2) circle [radius=3pt](E4) circle [radius=3pt];
\node[below] at (T) {$t$};\node[above] at (S) {$s$};\node at (0, -3.5) {c.};
\end{tikzpicture}
\end{minipage}%
\begin{minipage}{0.25\textwidth}
\begin{tikzpicture}[scale=0.6]
\coordinate (S) at (0,2);\coordinate (T) at (0,-2);\coordinate (A1) at (-2,0);\coordinate (B1) at (-1,0);\coordinate (C1) at (0,1);\coordinate (C2) at (0,0);\coordinate (C3) at (0,-1);\coordinate (D1) at (1,1);\coordinate (D2) at (1,0);\coordinate (D3) at (1,-1);\coordinate (E1) at (2,1.25);\coordinate (E2) at (2,0.625);\coordinate (E3) at (2,0);\coordinate (E4) at (2,-0.625);\coordinate (E5) at (2,-1.25);
\draw[->,color=red](C2)to[parallel segment above](C1);\draw[color=red] (C2)--(C1);\draw[->](T)to[parallel segment below](C3);\draw[->](D3)to[parallel segment above](T);\draw[->](D1)to[parallel segment below](D2);
\draw(D3)--(T)(T)--(E5)(E4)--(E3)(D2)--(D1)(S)--(A1)--(T)(S)--(B1)--(T)(S)--(C1)(C2)--(C3)--(T)(S)--(D1)(D2)--(D3)(S)--(E1)--(E2)--(E3)(E4)--(E5);
\draw[fill=white] (A1) circle [radius=3pt](B1) circle [radius=3pt](C1) circle [radius=3pt](C3) circle [radius=3pt](D2) circle [radius=3pt](E1) circle [radius=3pt](E3) circle [radius=3pt](E5) circle [radius=3pt];
\draw[fill=black](S) circle [radius=3pt](T) circle [radius=3pt](C2) circle [radius=3pt](D1) circle [radius=3pt](D3) circle [radius=3pt](E2) circle [radius=3pt](E4) circle [radius=3pt];
\node[below] at (T) {$t$};\node[above] at (S) {$s$};\node at (0, -3.5) {d.};
\end{tikzpicture}
\end{minipage}
\caption{For each graph in the figure, the black vertices show a configuration of a minimum eternal vertex cover class, the red edge is the attacked one, and the arrows highlight the movement of the guards. Figures a. and b.: odd melon graph, the strategy described in the proof of~\Cref{lem:odd} according to the two cases of the proof of~\Cref{lem:elem}. Figures c. and d.: even melon graph, the two cases in the proof of~\Cref{lem:evenmelon}.}\label{fig:cboe} 
\end{figure}

\subsection{Mixed Melon Graphs}

To solve \EVC~on mixed melon graphs, we need two more definitions. 

Let $G$ be a mixed melon graph, $U\subset V$ a subset of vertices, and $P_o$ a path in $\po$, constituted by the sequence of vertices $v_0,\ldots,v_{2m+1}$, for some $m\geq 0$, such that $v_0=t$, $v_{2m+1}=s$. We distinguish the two following behaviors of $P_o$ with respect to $U$: we say that $P_o$ is an {\em \spath} with respect to $U$ (or simply \spath,~if $U$ is clear from the context) if $(U \cap V(P_o))\setminus \{t\}=\{v_{2i+1}~|~i\in \{0,\ldots,m\}$, while we say that $P_o$ is an {\em \tpath} with respect to $U$ (or simply \spath,~if $U$ is clear from the context) if $(U\cap V(P_o))\setminus \{s\}=\{v_{2i}~|~i\in \{0,\ldots,m\}\}$. Intuitively, in an \spath\ (respectively a \tpath) the vertices of $U$ alternate starting from $s$ (respectively $t$), regardless of whether $t$ (respectively $s$) is in $U$ or not. As an example, see~\Cref{fig:mixo}.a, that showcases both \spath s and a \tpath.

\begin{lemma}\label{le:ciclo_dispari}
Let $G$ be an odd $2$-melon graph with paths $P$ and $P'$, source $s$ and sink $t$. Moreover, let $U$ be a set of vertices such that $P$ is an \spath~and $P'$ is a \tpath~with respect to $U$ and let $U'$ be a set of vertices such that $P$ is a \tpath~and $P'$ is an \spath~with respect to $U$. Then it is possible to defend $G$ from an attack on any \sg~edge by shifting $U$ to $U'$ and vice versa.
\end{lemma}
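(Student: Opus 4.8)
The plan is to follow the blueprint of the proof of \Cref{le:ciclo_pari}: we fix an arbitrary attacked \sg\ edge $e=zw$ with $z\in U$ and $w\notin U$, and we exhibit an explicit defense function $\phi\colon U\to V$ that is one-to-one, maps each $x\in U$ into $N[x]$, protects $e$ (that is, $\phi(z)=w$), and satisfies $\phi(U)=U'$; the ``vice versa'' direction is then recovered by a symmetry argument. Throughout we write $P=(u_0,u_1,\ldots,u_{2a+1})$ and $P'=(v_0,v_1,\ldots,v_{2b+1})$ with $u_0=v_0=t$ and $u_{2a+1}=v_{2b+1}=s$.

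First I would record a structural remark that pins down $U$ and $U'$ entirely. Although the definitions of \spath\ and \tpath\ leave the membership of $t$, respectively $s$, undetermined, here $s$ and $t$ are shared by $P$ and $P'$: the hypothesis that $P$ is an \spath\ w.r.t.\ $U$ forces $s\in U$, and the hypothesis that $P'$ is a \tpath\ w.r.t.\ $U$ forces $t\in U$; symmetrically $\{s,t\}\subseteq U'$. Hence $U\cap V(P)=\{t,u_1,u_3,\ldots,u_{2a+1}\}$, $U\cap V(P')=\{t,v_2,v_4,\ldots,v_{2b},s\}$, $U'\cap V(P)=\{t,u_2,u_4,\ldots,u_{2a},s\}$ and $U'\cap V(P')=\{t,v_1,v_3,\ldots,v_{2b+1}\}$; in particular $|U|=|U'|=a+b+2$, the only doubly guarded edges of $G$ under $U$ are $u_0u_1=tu_1$ and $v_{2b}v_{2b+1}=sv_{2b}$, and every other edge of $G$ is \sg. (If $a=0$ then $P$ is the doubly guarded single edge $st$, so no attack on $P$ occurs, and likewise if $b=0$; the formulas below remain valid, with empty index ranges contributing nothing.)

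The heart of the proof is the claim that two defense functions already suffice for all attacks made when the guards occupy $U$. Define
\[
\phi_1\colon\ \ u_{2i-1}\mapsto u_{2i}\ (1\le i\le a),\ \ v_{2i}\mapsto v_{2i-1}\ (1\le i\le b),\ \ s\mapsto s,\ \ t\mapsto t,
\]
\[
\phi_2\colon\ \ u_{2i-1}\mapsto u_{2i-2}\ (1\le i\le a),\ \ v_{2i}\mapsto v_{2i+1}\ (1\le i\le b),\ \ s\mapsto u_{2a},\ \ t\mapsto v_1,
\]
where $v_{2b+1}=s$. Informally, $\phi_1$ slides the inner guards of $P$ towards $s$ and those of $P'$ towards $t$ while keeping the guards on $s$ and $t$ fixed, whereas $\phi_2$ slides the inner guards the other way and moves the guard on $t$ into $P'$ (to $v_1$) and the guard on $s$ into $P$ (to $u_{2a}$). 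A direct check shows that each $\phi_j$ is one-to-one, maps each $x\in U$ into $N[x]$, and satisfies $\phi_j(U)=U'$. Moreover the edges actually crossed by $\phi_1$ are $u_1u_2,u_3u_4,\ldots,u_{2a-1}u_{2a}$ together with $v_1v_2,v_3v_4,\ldots,v_{2b-1}v_{2b}$, while those crossed by $\phi_2$ are $u_2u_3,u_4u_5,\ldots,u_{2a}u_{2a+1}$ together with $v_0v_1,v_2v_3,\ldots,v_{2b-2}v_{2b-1}$ (and also the two doubly guarded edges $u_0u_1$ and $v_{2b}v_{2b+1}$). Consequently each \sg\ edge of $G$ is crossed by exactly one of $\phi_1,\phi_2$, so for any \sg\ edge $e=zw$ with $z\in U$ and $w\notin U$ we have $\phi_1(z)=w$ or $\phi_2(z)=w$; in either case the corresponding $\phi_j$ is a defense that protects $e$ and shifts $U$ to $U'$.

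It remains to handle attacks made when the guards occupy $U'$. For this I would use the reflection $\rho$ of $G$ defined by $\rho(u_i)=u_{2a+1-i}$ and $\rho(v_i)=v_{2b+1-i}$; it is an automorphism of $G$ interchanging $s$ and $t$, and the explicit descriptions above give $\rho(U)=U'$ and $\rho(U')=U$. Given an attack on a \sg\ edge $e$ while the guards occupy $U'$, the edge $\rho(e)$ is \sg\ and in the mirrored situation the guards occupy $\rho(U')=U$, so the previous paragraph provides a defense $\psi$ that protects $\rho(e)$ and shifts $U$ to $U'$; then $\rho\circ\psi\circ\rho$ is one-to-one, maps each $x\in U'$ into $N[x]$, protects $e$, and has image $\rho(\psi(U))=\rho(U')=U$, i.e.\ it is a defense that shifts $U'$ to $U$. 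This completes the argument. I expect the only real work to lie in the third paragraph: verifying that $\phi_1$ and $\phi_2$ are one-to-one with image exactly $U'$ and that between them they cross every \sg\ edge of $G$, the delicate points being the behavior at the indices $0,2a,2a+1$ of $P$ and $0,2b,2b+1$ of $P'$, where guards pass between the two paths.
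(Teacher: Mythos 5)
Your proof is correct and follows essentially the same route as the paper's: the two defense functions $\phi_1$ and $\phi_2$ coincide with the two functions the paper exhibits (guards sliding around the cycle in the two possible directions of the forced move), and you verify they jointly cross every \sg\ edge while mapping $U$ onto $U'$. Your explicit treatment of the ``vice versa'' direction via the reflection automorphism, and the checks at the endpoints $s,t$, only make more careful what the paper leaves implicit by symmetry.
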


\begin{proof}
Let $e=zw$ be an edge of $G$. Intuitively, to protect $e$, we move the guards to turn $P$ into a \tpath~and $P'$ into a \spath~following the direction of the forced shift of the guard on $e$. Let $e=zw$ be an edge of $G$. Since $C$ is a vertex cover and $e$ is \sg, it is not restrictive to assume that $z\in C$ and $w\notin C$. Call $u_0,\ldots,u_{2m+1}$ the vertices of $P$ and $v_0,\ldots,v_{2m'+1}$ the vertices of $P'$, for some $m, m' \geq 0$, and let $u_0=v_0=t$ and $u_{2m}=v_{2m'}=s$. Then, to defend from the attack on $e$, we move the guards to turn $P$ into a \tpath~and $P'$ into an \spath~following the forced shift of the guard from $z$ to $w$.

In particular, first, assume that $e$ is either an edge of $P$ and $z=u_{2j+1}$ and $w=u_{2j+2}$ for some $0 \leq j<m$, or an edge of $P'$ and $z=v_{2j+2}$ and $w=v_{2j+1}$ for some $1 \leq j<m'$. Then, to defend from the attack on $e$, we use the following defense function $\phi$:
\begin{itemize}
\item $\phi(u_{2i+1})=u_{2i+2}$ for every $i=0, \ldots , m-1$;
\item $\phi(v_{2i+2})=v_{2i+1}$ for $i=1, \ldots , m'-1$, ;
\item $\phi(s)=s$ and $\phi(t)=t$.
\end{itemize}
It is clear that $z$ shifts to $w$ and $C$ to $C'$. 

Now, assume that $e$ is either an edge of $P$ and $z=u_{2j+1}$ and $w=u_{2j}$ for some $0<j \leq m$, or an edge of $P'$ and $z=v_{2j}$ and $w=v_{2j+1}$ for some $0 \leq j \leq m'$. Then, to defend from the attack on $e$, we use the following defense function $\phi$:
\begin{itemize}
\item for every $i \leq m$, $\phi(u_{2i+1})=u_{2i}$;
\item for every $i\leq m'$, $\phi(v_{2i})=v_{2i+1}$.
\end{itemize}
\end{proof}

Let $P_e \in \pe$ and let $\mathcal{S}_o$ be any subset of $\po$. We denote with $\cf{P_e}{\mathcal{S}_o}$ the vertex set such that:
\begin{itemize}
\item $P_e$ is an \exte~w.r.t. $\cf{P_e}{\mathcal{S}_o}$;
\item every path in $\pe\setminus \{P_e\}$ is an \inte~w.r.t. $\cf{P_e}{\mathcal{S}_o}$;
\item every path in $\mathcal{S}_o$ is an \spath~w.r.t. $\cf{P_e}{\mathcal{S}_o}$;
\item every path in $\po\setminus \mathcal{S}_o$ is a \tpath~w.r.t. $\cf{P_e}{\mathcal{S}_o}$.
\end{itemize}

\begin{theorem}\label{lem:mixedmelonboth}
Let $G$ be a mixed $k$-melon graph, for some $k\geq 4$; if $|\pe|\geq 2$ and $|\po|\geq 2$, then it holds that $\e{G}=\m{G}+1$ and the family $\mathcal{U}=\{\cf{P_e}{\mathcal{S}_o}~|~P_e\in \pe,~\emptyset\neq \mathcal{S}_o\subset \po\}$ is a minimum eternal vertex cover class of $G$, where the sets $\cf{P_e}{\mathcal{S}_o}$ are defined above. \end{theorem}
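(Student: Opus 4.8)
The plan is to prove the two inequalities $\e{G}\le \m{G}+1$ and $\e{G}\ge \m{G}+1$ separately: the first by showing that the announced family $\mathcal{U}$ is an eternal vertex cover class of size $\m{G}+1$, the second by a short counting argument together with one carefully chosen attack. The first step is to pin down $\m{G}$. Writing the even paths with lengths $2a_1,\dots,2a_p$ and the odd paths with lengths $2b_1+1,\dots,2b_q+1$, the internal part of an even path of length $2m$ is a path on an odd number of vertices and therefore has a \emph{unique} minimum vertex cover; comparing the four cases according to whether $s$ and $t$ are taken into the cover, the hypothesis $|\pe|=p\ge 2$ forces every minimum vertex cover of $G$ to contain both $s$ and $t$. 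It follows that $\m{G}=2+\sum_i(a_i-1)+\sum_j b_j$ and, moreover, that every even path is an \inte~with respect to every minimum vertex cover. Since $\cf{P_e}{\mathcal{S}_o}$ differs from such a cover only in that the single path $P_e$ is an \exte~rather than an \inte, which costs exactly one extra vertex, we get $|\cf{P_e}{\mathcal{S}_o}|=\m{G}+1$ for every admissible choice of $P_e$ and $\mathcal{S}_o$.

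Next I would exhibit the defence functions. Fix a configuration $\cf{P_e}{\mathcal{S}_o}$ and an attacked \sg~edge $e$ lying on a path $P$; all guards outside the two paths involved in a move will stay put. If $P\in\pe$, I reduce to~\Cref{le:ciclo_pari}: choose a second even path $P_e'$ (it exists since $|\pe|\ge 2$ — if $P=P_e$ take any path that is an \inte~w.r.t.\ the current configuration, otherwise take $P_e'=P$) and apply, on the even $2$-melon induced by $P_e\cup P_e'$, the shift that makes $P_e$ an \inte~and $P_e'$ an \exte; the result is the configuration $\cf{P_e'}{\mathcal{S}_o}\in\mathcal{U}$. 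If $P=P_o\in\po$, I reduce to~\Cref{le:ciclo_dispari}: pick $P_o'\in\po\setminus\mathcal{S}_o$ when $P_o\in\mathcal{S}_o$ and $P_o'\in\mathcal{S}_o$ otherwise (both sets are nonempty because $\emptyset\neq\mathcal{S}_o\subsetneq\po$), and apply, on the odd $2$-melon induced by $P_o\cup P_o'$, the shift that swaps the \spath/\tpath~roles of $P_o$ and $P_o'$; this yields $\cf{P_e}{\mathcal{S}_o'}$, where $\mathcal{S}_o'$ is obtained from $\mathcal{S}_o$ by deleting whichever of $P_o,P_o'$ belonged to it and inserting the other, so $\mathcal{S}_o'$ is again a nonempty proper subset of $\po$ since $|\po|\ge 2$. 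In every case the reached configuration lies in $\mathcal{U}$, so $\mathcal{U}$ is an eternal vertex cover class and $\e{G}\le\m{G}+1$.

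For the lower bound, an eternal vertex cover class of size at most $\m{G}$ would consist only of minimum vertex covers, each of which, by the first paragraph, contains $s$ and $t$ and makes every even path an \inte. Let $P_1\in\pe$ and let $u$ be the neighbour of $s$ on $P_1$; then $u$ lies in no minimum vertex cover. Consider the attack on $e=su$: since $u\notin U$ the edge is \sg, so the defender is forced to move the guard from $s$ across $e$ onto $u$, and hence every reachable configuration contains $u$ — a contradiction. Therefore $\e{G}\ge\m{G}+1$, and combining the two bounds gives $\e{G}=\m{G}+1$ with $\mathcal{U}$ a minimum eternal vertex cover class.

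I expect the main obstacle to be the bookkeeping in the defence step rather than any single hard idea. Because $s$ and $t$ are shared by all $k$ paths, each invocation of~\Cref{le:ciclo_pari} or~\Cref{le:ciclo_dispari} on a $2$-melon subgraph must be checked to move guards into and out of $s$ and $t$ in a balanced way, so that $s$ and $t$ remain occupied and the global outcome is \emph{exactly} one of the sets $\cf{P_e'}{\mathcal{S}_o'}$; once this is verified, the enumeration over the position of $e$ on its path is routine and mirrors the proofs of~\Cref{lem:evenmelon} and~\Cref{le:ciclo_dispari}.
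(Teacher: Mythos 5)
Your proposal is correct and follows essentially the same route as the paper: the upper bound is obtained by exhibiting the family $\mathcal{U}$ and defending each attack through a role-swap on a $2$-melon subgraph (two even paths via~\Cref{le:ciclo_pari}, or an \spath/\tpath\ pair of opposite type via~\Cref{le:ciclo_dispari}) while all other guards stay put, and the lower bound by showing that no family consisting only of minimum vertex covers can be eternal. Your lower-bound argument (every minimum vertex cover contains $s$ and $t$ and makes every even path an \inte, so an attack on the edge $su$ forces a guard onto a vertex lying in no minimum vertex cover) is a more explicit version of the paper's terser claim about the covers $U_{\mathcal{S}_o}$, and using~\Cref{le:ciclo_pari} directly instead of invoking~\Cref{lem:evenmelon} for attacks on even paths is an immaterial difference.
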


\begin{proof}
For every path set $\mathcal{S}_o$ such that $\emptyset\neq \mathcal{S}_o\subset \po$, consider the set $U_{\mathcal{S}_o}$ of vertices of $G$ such that all the even paths are internal, the odd paths in $\mathcal{S}_o$ are $s$-paths and the remaining odd paths are $t$-paths. In other words, $U_{\mathcal{S}_o}$ differs from any $\cf{P_e}{\mathcal{S}_o}$ only in $P_e$ that is not external anymore, so $|\cf{P_e}{\mathcal{S}_o}|=|U_{\mathcal{S}_o}|+1$, for every $P_e\in \pe$. Moreover, let the family of the sets $U_{\mathcal{S}_o}$ be the collection of all minimum vertex covers of $G$; this is not an eternal vertex cover class of $G$ because it is not possible to defend from an attack on any edge that belongs to a path in $\pe$. It follows that $\e{G}$ is at least $\m{G}+1$ and hence proving that $\mathcal{U}$ is an eternal vertex cover class of $G$ also shows that $\mathcal{U}$ is minimum.

Let $\cf{P_e}{\mathcal{S}_o}$ be a configuration of $\mathcal{U}$ and let $e$ be an attacked \sg~edge of $G$. If $e$ is an edge of a path $P_e \in \pe$, let $G'$ be the subgraph of $G$ induced by the vertices of the paths in $\pe$. The definition of $\mathcal{U}$ implies that $G'$ contains at least an internal and at least an external path with respect to $\cf{P_e}{\mathcal{S}_o}\cap V(G')$, and we call $\phi'$ the defense function obtained from~\Cref{lem:evenmelon} when applied to the even melon graph $G'$ to protect it from the attack on $e$. Then, to protect $G$ from the attack on edge $e$ we define the defense function $\phi$ as follows: $\phi(v)=\phi'(v)$ if $v$ is a vertex of $G'$ and $\phi(v)=v$ otherwise. It is easy to see that $\phi$ protects $e$.

Let $P_o\in \po$ be the path that contains $e$ and $P'_o$ be another path of $\po$ such that $P_o\in \mathcal{S}_o$ if and only if $P'_o\in \po\setminus \mathcal{S}_o$ and consider the odd $2$-melon graph $G'$ induced by the vertices of the paths of $P_o$ and $P'_o$. We call $\phi'$ the defense function obtained from~\Cref{le:ciclo_dispari} when applied to the odd melon graph $G'$ to protect it from the attack on $e$. To protect $G$ from the attack on edge $e$ we define the defense function $\phi$ as follows: $\phi(v)=\phi'(v)$ if $v$ is a vertex of $G'$ and $\phi(v)=v$ otherwise (see~\Cref{fig:mixo}.b). It is easy to see that $\phi$ protects $e$.
\end{proof}

\begin{figure}
\centering
\begin{minipage}{0.4\textwidth}
\begin{tikzpicture}[scale=0.6]
\coordinate (S) at (0,2);\coordinate (T) at (0,-2);\coordinate (A1) at (-2.5,1);\coordinate (A2) at (-2.5,0);\coordinate (A3) at (-2.5,-1);\coordinate (B1) at (-1.5,1);\coordinate (B2) at (-1.5,0);\coordinate (B3) at (-1.5,-1);\coordinate (C1) at (-0.5,1);\coordinate (C2) at (-0.5,0);\coordinate (C3) at (-0.5,-1);\coordinate (D1) at (0.5,1.2);\coordinate (D2) at (0.5,0.4);\coordinate (D3) at (0.5,-0.4);\coordinate (D4) at (0.5,-1.2);\coordinate (E1) at (1.5,1.2);\coordinate (E2) at (1.5,0.4);\coordinate (E3) at (1.5,-0.4);\coordinate (E4) at (1.5,-1.2);\coordinate (F1) at (2.5,1.2);\coordinate (F2) at (2.5,0.4);\coordinate (F3) at (2.5,-0.4);\coordinate (F4) at (2.5,-1.2);\coordinate (SP) at (2.5,2);\coordinate (TP) at (2.5,-2);\coordinate (EX) at (-2.5,2);\coordinate (IN) at (-2.5,-2);
\draw(S)--(A1)--(A2)--(A3)--(T)(S)--(B1)--(B2)--(B3)--(T)(S)--(C1)--(C2)--(C3)--(T)(S)--(D1)--(D2)--(D3)--(D4)--(T)(S)--(E1)--(E2)--(E3)--(E4)--(T)(S)--(F1)--(F2)--(F3)--(F4)--(T);
\draw[dashed,color=red](2.5,-1.8)--(1.2,-1.4)(2.3,-1.8)--(0.4,-1.5)(2.5,1.8)--(2,1.4);\draw[dashed,color=blue](-2.3,-1.8)--(-1,-1.4)(-2.1,-1.8)--(-0.3,-1.5)(-2.5,1.8)--(-1.6,1.4);\node at (SP) {\tpath};
\node at (TP) {\spath s};\node at (EX) {\exte};\node at (IN) {\inte s};
\draw[fill=white] (A2) circle [radius=3pt](B1) circle [radius=3pt](B3) circle [radius=3pt](C1) circle [radius=3pt](C3) circle [radius=3pt](D1) circle [radius=3pt](D3) circle [radius=3pt](E1) circle [radius=3pt](E3) circle [radius=3pt](F1) circle [radius=3pt](F2) circle [radius=3pt](F3) circle [radius=3pt](F4) circle [radius=3pt];
\draw[fill=black](S) circle [radius=3pt](T) circle [radius=3pt](A1) circle [radius=3pt](A3) circle [radius=3pt](B2) circle [radius=3pt](C2) circle [radius=3pt](D2) circle [radius=3pt](D4) circle [radius=3pt](E2) circle [radius=3pt](E4) circle [radius=3pt](F1) circle [radius=3pt](F3) circle [radius=3pt];
\node[below] at (T) {$t$};\node[above] at (S) {$s$};\node at (0, -3.5) {a.};
\end{tikzpicture}
\end{minipage}%
\begin{minipage}{0.3\textwidth}
\begin{tikzpicture}[scale=0.6]
\coordinate (S) at (0,2);\coordinate (T) at (0,-2);\coordinate (A1) at (-2.5,1);\coordinate (A2) at (-2.5,0);\coordinate (A3) at (-2.5,-1);coordinate (B1) at (-1.5,1);\coordinate (B2) at (-1.5,0);\coordinate (B3) at (-1.5,-1);\coordinate (C1) at (-0.5,1);\coordinate (C2) at (-0.5,0);\coordinate (C3) at (-0.5,-1);\coordinate (D1) at (0.5,1.2);\coordinate (D2) at (0.5,0.4);\coordinate (D3) at (0.5,-0.4);\coordinate (D4) at (0.5,-1.2);\coordinate (E1) at (1.5,1.2);\coordinate (E2) at (1.5,0.4);\coordinate (E3) at (1.5,-0.4);\coordinate (E4) at (1.5,-1.2);\coordinate (F1) at (2.5,1.2);\coordinate (F2) at (2.5,0.4);\coordinate (F3) at (2.5,-0.4);\coordinate (F4) at (2.5,-1.2);\coordinate (SP) at (2.5,2);\coordinate (TP) at (2.5,-2);\coordinate (EX) at (-2.5,2);\coordinate (IN) at (-2.5,-2);
\draw[->,color=red](B2)to[parallel segment below](B1);\draw[color=red] (B2)--(B1);\draw[->](T)to[parallel segment above](B3);\draw[->](A3)to[parallel segment above](T);\draw[->](A1)to[parallel segment above](A2);
\draw(S)--(E1)(E4)--(T)(T)--(F4)(F3)--(F2)(F1)--(S)(S)--(A1)--(A2)--(A3)--(T)(S)--(B1)(B2)--(B3)--(T)(S)--(C1)--(C2)--(C3)--(T)(S)--(D1)--(D2)--(D3)--(D4)--(T)(E1)--(E2)--(E3)--(E4)(F1)--(F2)(F3)--(F4);
\draw[fill=white] (A2) circle [radius=3pt](B1) circle [radius=3pt](B3) circle [radius=3pt](C1) circle [radius=3pt](C3) circle [radius=3pt](D1) circle [radius=3pt](D3) circle [radius=3pt](E1) circle [radius=3pt](E3) circle [radius=3pt](F1) circle [radius=3pt](F2) circle [radius=3pt](F3) circle [radius=3pt](F4) circle [radius=3pt];
\draw[fill=black](S) circle [radius=3pt](T) circle [radius=3pt](A1) circle [radius=3pt](A3) circle [radius=3pt](B2) circle [radius=3pt](C2) circle [radius=3pt](D2) circle [radius=3pt](D4) circle [radius=3pt](E2) circle [radius=3pt](E4) circle [radius=3pt](F1) circle [radius=3pt](F3) circle [radius=3pt];
\node[below] at (T) {$t$};\node[above] at (S) {$s$};\node at (0, -3.5) {b.};
\end{tikzpicture}
\end{minipage}%
\begin{minipage}{0.3\textwidth}
\begin{tikzpicture}[scale=0.6]
\coordinate (S) at (0,2);\coordinate (T) at (0,-2);\coordinate (A1) at (-2.5,1);\coordinate (A2) at (-2.5,0);\coordinate (A3) at (-2.5,-1);\coordinate (B1) at (-1.5,1);\coordinate (B2) at (-1.5,0);\coordinate (B3) at (-1.5,-1);\coordinate (C1) at (-0.5,1);\coordinate (C2) at (-0.5,0);\coordinate (C3) at (-0.5,-1);\coordinate (D1) at (0.5,1.2);\coordinate (D2) at (0.5,0.4);\coordinate (D3) at (0.5,-0.4);\coordinate (D4) at (0.5,-1.2);\coordinate (E1) at (1.5,1.2);\coordinate (E2) at (1.5,0.4);\coordinate (E3) at (1.5,-0.4);\coordinate (E4) at (1.5,-1.2);\coordinate (F1) at (2.5,1.2);\coordinate (F2) at (2.5,0.4);\coordinate (F3) at (2.5,-0.4);\coordinate (F4) at (2.5,-1.2);\coordinate (SP) at (2.5,2);\coordinate (TP) at (2.5,-2);\coordinate (EX) at (-2.5,2);\coordinate (IN) at (-2.5,-2);
\draw[->,color=red](D2)to[parallel segment above](D3);\draw[color=red] (D2)--(D3);\draw[->](D4)to[parallel segment above](T);\draw[->]([shift={(0,-0.5)}]T.center)to[parallel segment below]([shift={(0,-0.5)}]F4.center);\draw[->](F3)to[parallel segment above](F2);\draw[->]([shift={(0,0.5)}]F1.center)to[parallel segment below]([shift={(0,0.5)}]S.center);\draw[->](S)to[parallel segment above](D1);
\draw(S)--(E1)(E4)--(T)(T)--(F4)(F3)--(F2)(F1)--(S)(S)--(A1)--(A2)--(A3)--(T)(S)--(B1)--(B2)--(B3)--(T)(S)--(C1)--(C2)--(C3)--(T)(S)--(D1)--(D2)(D3)--(D4)--(T)(E1)--(E2)--(E3)--(E4)(F1)--(F2)(F3)--(F4);
\draw[fill=white] (A2) circle [radius=3pt](B1) circle [radius=3pt](B3) circle [radius=3pt](C1) circle [radius=3pt](C3) circle [radius=3pt](D1) circle [radius=3pt](D3) circle [radius=3pt](E1) circle [radius=3pt](E3) circle [radius=3pt](F1) circle [radius=3pt](F2) circle [radius=3pt](F3) circle [radius=3pt](F4) circle [radius=3pt];
\draw[fill=black](S) circle [radius=3pt](T) circle [radius=3pt](A1) circle [radius=3pt](A3) circle [radius=3pt](B2) circle [radius=3pt](C2) circle [radius=3pt](D2) circle [radius=3pt](D4) circle [radius=3pt](E2) circle [radius=3pt](E4) circle [radius=3pt](F1) circle [radius=3pt](F3) circle [radius=3pt];
\node[below] at (T) {$t$};\node[above] at (S) {$s$};\node at (0, -3.5) {c.};
\end{tikzpicture}
\end{minipage}
\caption{For each graph in the figure, the black vertices show a configuration of a minimum eternal vertex cover class. Figures a., b. and c.: mixed melon graph with at least two even paths and two odd paths. Figure a. highlights even internal and external paths, and odd $s$- and $t$-paths. In Figures b. and c. the red edge is the attacked one, and the arrows highlight the movement of the guards, two cases in the proof of \Cref{lem:mixedmelonboth}.}\label{fig:mixo}  
\end{figure}
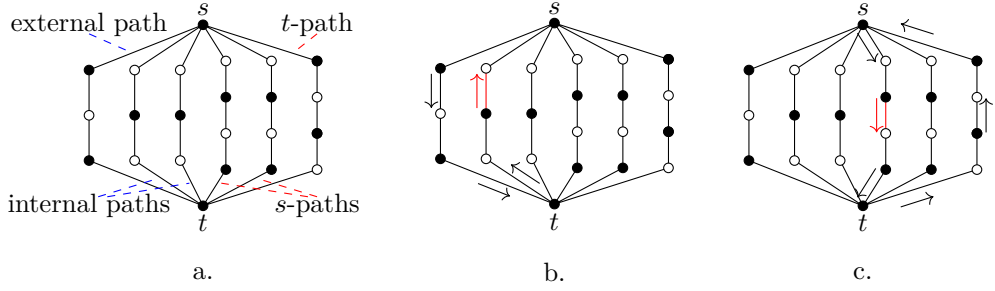

Consider now the case where $\po$ contains a single path $P_o$. Let $x\in \{s,t\}$ and $P_e\in \pe$. We denote with $\cf{x}{P_e}$ the vertex set such that:
\begin{itemize}
\item $P_e$ is an \exte~w.r.t. $\cf{x}{P_e}$;
\item every path in $\pe\setminus \{P_e\}$ is an \inte~w.r.t. $\cf{x}{P_e}$;
\item $P_o$ is an $x$-path w.r.t. $\cf{x}{P_e}$.
\end{itemize}

\begin{theorem}\label{lem:mixedmelonsingleo}
Let $G$ be a mixed $k$-melon graph, for some $k\geq 3$; if $|\po|=1$, then it holds that $\e{G}=\m{G}+1$ and the family $\,\mathcal{U}=\{\cf{x}{P_e}~|~x\in \{s,t\},P_e\in \pe\}$ is a minimum eternal vertex cover class of $G$, where the sets $\cf{x}{P_e}$ are defined above.
\end{theorem}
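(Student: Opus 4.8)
The plan is to follow the two–part pattern of \Cref{lem:evenmelon} and \Cref{lem:mixedmelonboth}: first prove $\e{G}\ge\m{G}+1$ by inspecting the minimum vertex covers of $G$, then show that the family $\mathcal{U}=\{\cf{x}{P_e}\}$ is an eternal vertex cover class of size $\m{G}+1$. Throughout I use that, since $k\ge 3$ and $|\po|=1$, we have $|\pe|=k-1\ge 2$.

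\emph{Lower bound.} I would first show that every minimum vertex cover $U$ of $G$ contains $s$ and $t$ and has all even paths internal with respect to it. That $s,t\in U$ follows by a counting argument: leaving $t$ out of $U$ forces the neighbour of $t$ on each even path into $U$, which is at least $|\pe|-1\ge 1$ vertices more than keeping $t\in U$, and symmetrically for $s$. Given $s,t\in U$, the restriction of $U$ to the interior of each even path must be a minimum vertex cover of a path with an odd number of vertices; such a minimum vertex cover is unique and coincides with the internal pattern, so on every even path the neighbour $v_1$ of $t$ is not in $U$. Now fix such an even path and attack the edge $tv_1$: since $v_1$ is unoccupied, the only way to traverse this edge is to move the guard on $t$ onto $v_1$, hence the resulting configuration occupies $v_1$. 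As every configuration of a hypothetical eternal vertex cover class of size $\m{G}$ is a minimum vertex cover and so never occupies $v_1$, no such class answers this attack, and therefore $\e{G}\ge\m{G}+1$.

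\emph{The configurations and the easy attacks.} Each $\cf{x}{P_e}$ is a vertex cover in which every edge is met exactly once, except the two edges of the external path $P_e$ at $s$ and $t$, which are met twice; replacing $P_e$ by an internal path turns $\cf{x}{P_e}$ into a set with one fewer vertex which, by the structure of minimum vertex covers established above, is minimum, so $|\cf{x}{P_e}|=\m{G}+1$. Hence all configurations have equal size, and by the lower bound it suffices to prove that $\mathcal{U}$ is an eternal vertex cover class. If the attacked \sg~edge $e$ lies on an even path, I would restrict to the even melon subgraph $G'$ induced by the vertices of $\pe$, on which $\cf{x}{P_e}$ induces an internal/external configuration in the sense of \Cref{lem:evenmelon}; the defence of \Cref{lem:evenmelon} on $G'$ (or directly of \Cref{le:ciclo_pari} when $|\pe|=2$), extended by the identity outside $G'$, protects $e$, leaves $P_o$ an $x$-path, and only relabels which even path is external, so we reach some $\cf{x}{P_{e''}}\in\mathcal{U}$.

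\emph{Attacks on the odd path --- the main obstacle.} The difficult case is an attack on the unique $P_o\in\po$, since, unlike in \Cref{lem:mixedmelonboth}, there is no second odd path to pair with $P_o$ so as to invoke \Cref{le:ciclo_dispari}. Assume the current configuration is $\cf{s}{P_e}$, so $P_o$ is an \spath\ (the \tpath\ case being symmetric), and let $e$ be a \sg~edge of $P_o$; the edge of $P_o$ at $t$ is doubly guarded, hence not \sg. If $e$ forces its guard to move towards $s$, the move that shifts every interior guard of $P_o$ one step towards $s$, fixing $s$, $t$ and every even path, crosses $e$ and turns $P_o$ into a \tpath, giving $\cf{t}{P_e}\in\mathcal{U}$. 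If $e$ forces its guard towards $t$, shifting $P_o$ in isolation would either vacate $s$ or displace the guard on $t$, so I would instead use a coordinated move that simultaneously: shifts every interior guard of $P_o$ together with the guard on $s$ one step towards $t$, which crosses $e$ and frees the guard on $t$; sends that freed guard to the first interior vertex of a fixed internal even path $P_{e'}$ (which exists because $|\pe|\ge 2$), turning $P_{e'}$ external after a one-step shift of its interior; and shifts the whole external path $P_e$ one step towards $s$, so that a guard from $P_e$ refills $s$ and $P_e$ becomes internal. The target configuration is then $\cf{t}{P_{e'}}\in\mathcal{U}$. The technical heart, and the step I expect to require the most care, is checking that this map is a bijection onto $\cf{t}{P_{e'}}$ with every guard moving along an edge --- equivalently, that the single extra guard is transported consistently from $P_e$ to $P_{e'}$ through the shared vertices $s$ and $t$ while leaving no edge uncovered.
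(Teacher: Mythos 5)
Your proposal is correct and follows essentially the same route as the paper's proof: even-path attacks are handled identically by restricting to the even melon subgraph and invoking \Cref{lem:evenmelon}, and for an attack on the unique odd path you use the same two moves as the paper (shifting the guards of $P_o$ toward $s$ to reach $\cf{t}{P_e}$, or the coordinated rotation through $s$ and $t$ via a second even path $P_{e'}$, landing in $\cf{t}{P_{e'}}$), whose verification is routine. If anything, your explicit lower-bound argument — every minimum vertex cover keeps all even paths internal, so the forced move of the guard on $t$ onto its even-path neighbour leaves a non-minimum cover — is more careful than the paper's terse treatment of that step.
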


\begin{proof}
The graph $G$ has two minimum vertex covers $U_{x}$, $x\in \{s,t\}$: $U_{x}$ is the set of vertices of $G$ such that all even paths are \inte s and $P_o$ is a $x$-path. In other words, $U_{x}$ differs from any $\cf{x}{P_e}$ only in $P_e$ that is no longer external, so $|\cf{x}{P_e}|=|U_x|+1$, for every $P_e\in \pe$. 

Let $\cf{x}{P_e}$ be a configuration of $\mathcal{U}$. Due to the symmetry of $G$, it is not restrictive to assume that $x=s$. Let $e$ be an attacked edge.

If $e$ is an edge of a path in $\pe$, let $G'$ be the subgraph of $G$ induced by the vertices of the paths in $\pe$. The definition of $\mathcal{U}$ implies that $G'$ contains at least an internal and at least an external path, and we call $\phi'$ the defense function obtained from~\Cref{lem:evenmelon} when applied to the even melon graph $G'$ when protecting from the attack on $e$. To protect $G$, we define the defense function $\phi$ as follows: $\phi(v)=\phi'(v)$ if $v$ is a vertex of $G'$ and $\phi(v)=v$ otherwise. It is easy to see that $\phi$ protects $e$.

Suppose instead that $e=zw$ is an edge of the unique path $P_o\in \po$ and, without loss of generality, let $z\in \cf{s}{P_e}$ and $w\notin\cf{s}{P_e}$. Call $v_0,\ldots,v_{2m+1}$ the vertices of $P_o$, for some $m \geq 0$, $v_0=t$ and $v_{2m+1}=s$; since $P_o$ is an \spath, then $z=v_{2j+1}$ for some $j\leq m$. We distinguish two cases according to whether $w=v_{2j+2}$ or $w=v_{2j}$, that is, whether the guard on $z$ must be moved in the direction of $s$ or of $t$ in order to protect $e$.

If $w=v_{2j+2}$ (and hence $j<m$, see~\Cref{fig:mixeven}.a), we protect from the attack on edge $e$ by shifting all the guards on $P_o$ (except $t$) in the direction of $s$. Formally, the defense function $\phi$ is defined as follows: for every $0 \leq i<m$, $\phi(v_{2i+1})=v_{2i+2}$ and for every vertex $v$ of $G'$, $\phi(v)=v$. It is clear that $z$ shifts to $w$ and $\cf{s}{P_e}$ to $\cf{t}{P_e}$. 

If, instead, $w=v_{2j}$, for some $j\leq m$ (see~\Cref{fig:mixeven}.b), let $P'_e$ any even path of $G$ different from $P_e$. We have that $\cf{t}{P'_e}$ protects $\cf{s}{P_e}$ from the attack on $e$, shifting all the guards on $P_o$ in the direction of $t$ (and $P_o$ becoming a \tpath), and all the guards on $P'_{e}$ and on $P_e$ in the direction of $s$ ($P'_e$ becomes an \exte\ while $P_e$ becomes an \inte). In particular, say that the path $P_e$ has $\{u_0,\ldots,u_{2m_e}\}$ as vertices, with $v_0=t$ and $v_{2m_e}=s$, for some $m_e\geq 0$, and that the path $P_e'$ has $\{x_0,\ldots,x_{2m'_e}\}$ as vertices, with $x_0=t$ and $x_{2m'_e}=s$, for some $m'_e\geq 0$. Let $\phi$ be the defense function as follows:
\begin{itemize}
\item for every $0\leq i\leq m$, $\phi(v_{2i+1})=v_{2i}$; 
\item for every $0 \leq i<m_e$, $\phi(u_{2i+1})=u_{2i+2}$;
\item for every $0\leq i<m'_e$, $\phi(x_{2i})=x_{2i+1}$; 
\item for every vertex $u$, that is not part of neither $P_o$, $P_e$ nor $P_e'$, $\phi(u)=u$.
\end{itemize}
It is clear that $z$ shifts to $w$ and $\cf{s}{P_e}$ to $\cf{t}{P_e'}$. This completes the proof.
\end{proof}

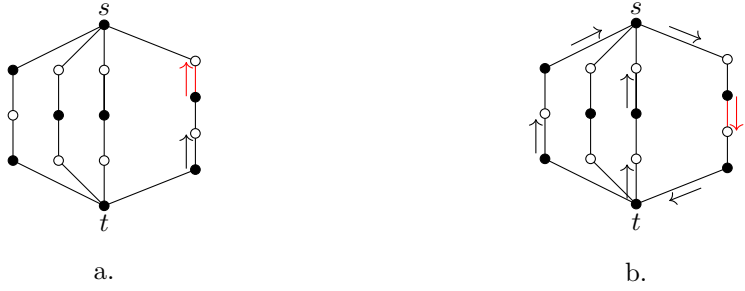
\begin{figure}
\begin{minipage}{0.5\textwidth}
\centering
\begin{tikzpicture}[scale=0.6]
\coordinate (S) at (0,2);\coordinate (T) at (0,-2);\coordinate (A1) at (-2,1);\coordinate (A2) at (-2,0);\coordinate (A3) at (-2,-1);\coordinate (B1) at (-1,1);\coordinate (B2) at (-1,0);\coordinate (B3) at (-1,-1);\coordinate (C1) at (0,1);\coordinate (C2) at (0,0);\coordinate (C3) at (0,-1);\coordinate (D1) at (2,1.2);\coordinate (D2) at (2,0.4);\coordinate (D3) at (2,-0.4);\coordinate (D4) at (2,-1.2);
\draw[->,color=red](D2)to[parallel segment below](D1);\draw[color=red] (D2)--(D1);\draw[->](D4)to[parallel segment below](D3);
\draw(S)--(D1)(D4)--(T)(T)--(C3)(C2)--(C1)(A3)--(A2)(A1)--(S)(A1)--(A2)(A3)--(T)(S)--(B1)--(B2)--(B3)--(T)(S)--(C1)--(C2)--(C3)(D2)--(D3)--(D4);
\draw[fill=white] (A2) circle [radius=3pt](B1) circle [radius=3pt](B3) circle [radius=3pt](C1) circle [radius=3pt](C3) circle [radius=3pt](D1) circle [radius=3pt](D3) circle [radius=3pt];
\draw[fill=black](S) circle [radius=3pt](T) circle [radius=3pt](A1) circle [radius=3pt](A3) circle [radius=3pt](B2) circle [radius=3pt](C2) circle [radius=3pt](D2) circle [radius=3pt](D4) circle [radius=3pt];
\node[below] at (T) {$t$};\node[above] at (S) {$s$};\node at (0, -3.5) {a.};
\end{tikzpicture}
\end{minipage}%
\begin{minipage}{0.5\textwidth}
\centering
\begin{tikzpicture}[scale=0.6]
\coordinate (S) at (0,2);\coordinate (T) at (0,-2);\coordinate (A1) at (-2,1);\coordinate (A2) at (-2,0);\coordinate (A3) at (-2,-1);\coordinate (B1) at (-1,1);\coordinate (B2) at (-1,0);\coordinate (B3) at (-1,-1);\coordinate (C1) at (0,1);\coordinate (C2) at (0,0);\coordinate (C3) at (0,-1);\coordinate (D1) at (2,1.2);\coordinate (D2) at (2,0.4);\coordinate (D3) at (2,-0.4);\coordinate (D4) at (2,-1.2);
\draw[->,color=red](D2)to[parallel segment below](D3);\draw[color=red] (D2)--(D3);\draw[->](D4)to[parallel segment below](T);\draw[->](S)to[parallel segment below](D1);\draw[->](T)to[parallel segment below](C3);\draw[->](C2)to[parallel segment below](C1);\draw[->](A3)to[parallel segment below](A2);\draw[->](A1)to[parallel segment below](S);
\draw(S)--(D1)(D4)--(T)(T)--(C3)(C2)--(C1)(A3)--(A2)(A1)--(S)(A1)--(A2)(A3)--(T)(S)--(B1)--(B2)--(B3)--(T)(S)--(C1)--(C2)--(C3)(D1)--(D2)(D3)--(D4);
\draw[fill=white] (A2) circle [radius=3pt](B1) circle [radius=3pt](B3) circle [radius=3pt](C1) circle [radius=3pt](C3) circle [radius=3pt](D1) circle [radius=3pt](D3) circle [radius=3pt];
\draw[fill=black](S) circle [radius=3pt](T) circle [radius=3pt](A1) circle [radius=3pt](A3) circle [radius=3pt](B2) circle [radius=3pt](C2) circle [radius=3pt](D2) circle [radius=3pt](D4) circle [radius=3pt];
\node[below] at (T) {$t$};\node[above] at (S) {$s$};\node at (0, -3.5) {b.};
\end{tikzpicture}
\end{minipage}
\caption{For each graph in the figure, the black vertices show a configuration of a minimum eternal vertex cover class, the red edge is the attacked one, and the arrows highlight the movement of the guards. Figures a. and b.: mixed melon graph with at least two even paths and only one odd path, the two cases in the proof of \Cref{lem:mixedmelonsingleo}.}\label{fig:mixeven}  
\end{figure}

Finally, consider the case where $\pe$ contains a single path $P_e$, and $\po$ contains at least two paths. The set $U_s$ (resp.\ $U_t$) is a vertex set not containing $t$ (resp.\ $s$) such that $P_e$ is an \exte~and every path in $\po$~is a \spath\ (resp. \tpath) w.r.t. $U_s$ (resp.\ $U_t$). Moreover, for any subset $\mathcal{S}_o$ of $\po$, let $U_{\mathcal{S}_o}$ be the vertex set of $G$ such that:
\begin{itemize}
\item $P_e$ is an \inte,
\item every path in $\mathcal{S}_o$ is a \spath
\item  every path in $\po\setminus \mathcal{S}_o$ is a \tpath~w.r.t. $U_{\mathcal{S}_o}$.
\end{itemize} 

Observe that $U_s$, $U_t$ and every $U_{\mathcal{S}_o}$ are vertex covers of $G$ and have all the same cardinality. Indeed, the extra guard present in the \exte\ is compensated by the presence of exactly one guard in $\{s,t\}$. Vice versa, the second guard on the set $\{s,t\}$ is compensated by one less guard in the \inte. As an example, see \Cref{fig:mixe}, that showcases the configurations $U_s$ and $U_{\mathcal{S}_o}$.

\begin{theorem}\label{lem:mixedmelonsinglee}
Let $G$ be a mixed $k$-melon graph, for some $k\geq 3$; if $|\pe|=1$, it holds that $\e{G}=\m{G}$ and the family $\mathcal{U}=\{U_s,U_t\}\cup \{U_{\mathcal{S}_o}~|~\emptyset \neq  \mathcal{S}_o\subset \po\}$ is a minimum eternal vertex cover class of $G$, where the sets $U_s$, $U_t$ and $U_{\mathcal{S}_o}$ are defined above. 
\end{theorem}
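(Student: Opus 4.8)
The plan is to follow the same three–part template used for the earlier melon sub‑cases. First I would argue that every set in $\mathcal{U}$ is a minimum vertex cover of $G$; since $\e{G}\ge\m{G}$ always holds, it then suffices to prove that $\mathcal{U}$ is an eternal vertex cover class to obtain simultaneously $\e{G}=\m{G}$ and the minimality of $\mathcal{U}$. Note that, unlike in \Cref{lem:evenmelon,lem:mixedmelonboth,lem:mixedmelonsingleo}, \emph{no} separate lower‑bound argument (ruling out minimum vertex covers as configurations) is needed here, precisely because the claimed value is $\m{G}$: exhibiting one eternal class of minimum‑vertex‑cover size is enough. That $|U_s|=|U_t|=|U_{\mathcal{S}_o}|$ for all valid $\mathcal{S}_o$ is the observation preceding the statement; that this common value equals $\m{G}$ follows from a short case analysis on whether a vertex cover of $G$ contains $0$, $1$, or $2$ of the vertices $s,t$: containing both, or exactly one, attains exactly this value (taking $P_e$ \inte~and every odd path an \spath~or a \tpath), while containing neither is strictly worse, since then every path is forced to put both its $s$‑neighbour and its $t$‑neighbour in the cover.

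For the eternal‑class part I would split on the configuration and on the location of the attacked \sg~edge $e$. If the configuration is $U_{\mathcal{S}_o}$ and $e$ lies on an odd path $P_o$, pick an odd path $P'_o$ of the opposite type with respect to $U_{\mathcal{S}_o}$ (an \spath~if $P_o$ is a \tpath, and vice versa; such $P'_o$ exists because $\emptyset\ne\mathcal{S}_o\subsetneq\po$), apply \Cref{le:ciclo_dispari} to the odd $2$‑melon induced by $P_o\cup P'_o$, and extend the resulting defense by the identity outside it; the key check is that the defense of \Cref{le:ciclo_dispari} never vacates $s$ nor $t$ (it may only replace the guard sitting on them), so the extension is a legitimate configuration, namely $U_{\mathcal{S}'_o}$ with $\mathcal{S}'_o$ obtained from $\mathcal{S}_o$ by swapping $P_o$ and $P'_o$, which is again a nonempty proper subset of $\po$. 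If the configuration is $U_{\mathcal{S}_o}$ and $e$ lies on $P_e$, the forced shift of the guard on $e$ runs along $P_e$; shifting every guard of $P_e$ one step in that direction turns $P_e$ into an \exte~missing $t$ (resp.\ $s$), which evicts the shared guard from $t$ (resp.\ $s$), and simultaneously shifting each \tpath~(resp.\ each \spath) one step turns it into an \spath~(resp.\ a \tpath) while leaving the others unchanged; the result is exactly $U_s$ (resp.\ $U_t$).

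Finally, for a configuration $U_s$ (the $U_t$ case being symmetric), whatever \sg~edge $e$ is attacked we convert a single odd path $Q$ into a \tpath~and make $P_e$ \inte, landing in $U_{\po\setminus\{Q\}}$, which lies in $\mathcal{U}$ because $\po\setminus\{Q\}$ is a nonempty proper subset of $\po$ (this is where $|\po|\ge2$ is used): if $e$ is on an \spath~one sets $Q$ to be that path, and if $e$ is on $P_e$ one lets $Q$ be arbitrary. According to which endpoint carries the guard and to the direction of the forced shift, $P_e$ is shifted one step towards $s$ or towards $t$ (ending \inte, possibly receiving its $t$‑guard from $Q$), $Q$ is shifted one step towards $t$ (ending a \tpath, possibly releasing its $s$‑guard, which is then restored by the shift of $P_e$), and every other \spath~stays put; in each sub‑case one checks that both $s$ and $t$ end up occupied and that no vertex receives two guards.

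The hard part is entirely in this last family of cases, and more generally in keeping the guards on the shared vertices $s$ and $t$ consistent. When a $P_e$‑edge is attacked, the forced move drops a guard onto a vertex whose parity is incompatible with the current pattern of $P_e$, so $P_e$ is compelled to change role; the induced cascade along $P_e$ reaches $s$ or $t$ and, that guard being common to all paths, it propagates into one or more odd paths. The delicate point is to orchestrate these cascades so that (i) no vertex is ever assigned two guards — which is why $P_e$ and the helper path $Q$ must be shifted in carefully matched directions — and (ii) the configuration reached is one of the \emph{listed} members of $\mathcal{U}$ and not some equally large ``hybrid'' vertex cover (for instance $P_e$ half‑\inte/half‑\exte, or $P_e$ \inte~with all odd paths of the same type) that would otherwise look eternal but is not in $\mathcal{U}$. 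This is what forces the somewhat ad hoc, edge‑by‑edge choice of shift directions and of the helper path $Q$, and is the reason the case analysis is long even though every individual guard move is elementary.
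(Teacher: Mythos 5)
Your proposal is correct and follows essentially the same route as the paper: the same reduction (every configuration is a minimum vertex cover, so exhibiting the eternal class suffices), the same split on configuration type and attack location, the same invocation of \Cref{le:ciclo_dispari} for an odd-path attack from a configuration $U_{\mathcal{S}_o}$, and the same one-step rotation of $P_e$ together with a single helper odd path $Q$ from $U_s$ or $U_t$, landing in $U_{\po\setminus\{Q\}}$ (your bookkeeping of the target is in fact the accurate one). The only inaccuracy is the blanket claim that $Q$ always shifts towards $t$: when the forced move of the attacked guard points the other way, the rotation must run in the opposite sense ($Q$ towards $s$, $P_e$ towards $t$, with the guard on $t$ then supplied by $P_e$ itself), which is exactly the ``carefully matched directions'' issue you flag and does not affect the strategy or the target configuration.
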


\begin{proof}
Every configuration of $\mathcal{U}$ is a minimum vertex cover of $G$; therefore, to prove the claim, it is enough to show that $\mathcal{U}$ is an eternal vertex cover class of $G$. Let $U$ be a configuration of $\mathcal{U}$ and $e$ be a \sg~edge of $G$. We consider two different cases, distinguishing whether $U$ is of the form either $U_x$, for some $x\in \{s,t\}$, or $U_{\mathcal{S}_o}$, for some non-empty proper subset $\mathcal{S}_o$ of $\po$.

\smallskip
\noindent
Case 1: $U$ is of the form $U_x$, for some $x\in \{s,t\}$. Thanks to the symmetry of $G$, it is not restrictive to assume $U=U_s$. For every non-empty proper subset $\mathcal{S}_o$ of $\po$, it holds that $U_{\mathcal{S}_o}$ protects $U_s$.

To prove this claim, it is not restrictive to assume that the attacked edge $e=zw$ is such that $z\in U_s$ and $w\not\in U_s$. We analyze different cases according to the position of $e$ in $G$. Let path $P_e$ be a sequence of vertices $u_0,\ldots,u_{2m}$, for some $m\geq 1$ such that $u_0=t$, $u_{2m}=s$. Moreover, let $P_o$ be any path in $\po$ and recall that $P_o$ is a \spath. Let $P_o$ be a sequence of vertices $v_0,\ldots,v_{2m_o+1}$, for some $m_o\geq 0$ such that $v_0=t$, $v_{2m_o+1}=s$.

Assume first that $e$ is an edge of the unique path $P_e\in \pe$, then $z=u_{2j+1}$ for some $j<m$. Informally, a defending strategy consists of turning the guards along the cycle formed by $P_e$ and any other odd path around it. Formally, the edge can be attacked in order to move the guard on $z$ either in the direction of $s$ or of $t$. In the first case, $w=u_{2j+2}$. To defend $G$ from this attack, we define a defense function $\phi$ as follows:
\begin{itemize}
\item for every $0 \leq i<m$, $\phi(u_{2i+1})=u_{2i+2}$;
\item for every $0 \leq i\leq m_o$, $\phi(v_{2i+1})=v_{2i}$;
\item for every $x$, that is not part of neither $P_e$ nor $P_o$, $\phi(v)=v$.
\end{itemize}

If, instead, the guard on $z$ is moved in the direction of $t$, then $w=u_{2j}$, for some $0 \leq j<m$. To defend $G$ from this attack, we define a defense function $\phi$ as follows:
\begin{itemize}
\item for every $0 \leq i<m$, $\phi(u_{2i+1})=u_{2i}$;
\item for every $0 \leq i<m_o$, $\phi(v_{2i+1})=v_{2i+2}$;
\item for every $v$, that is not part of neither $P_e$ nor $P_o$, $\phi(v)=v$.
\end{itemize}
It is easy to see that in both cases $\phi(z)=w$ and $\phi(U_s)=U_{\{P_o\}}$. 

Assume now that $e$ is an edge of some path $P_o\in \po$. It is easy to see that by exploiting one of the two defense functions defined above, we obtain to shift $U_s$ to $U_{\{P_o\}}$ and successfully defend from the attack on $e$. 

\smallskip
\noindent
Case 2: $U$ is of the form $U_{\mathcal{S}_o}$, for some non-empty proper subset $\mathcal{S}_o$ of $\po$. Then, either $U_x$,  with $x\in \{s,t\}$, or $U_{\mathcal{S}_o'}$, for some non-empty proper subset $\mathcal{S}_o'$ of $\po$, defends $U_{\mathcal{S}_o}$ from the attack on $e$. To prove this claim, assume again that $e=zw$ with $z\in U_{\mathcal{S}}$ and $w\not\in \mathcal{S}$. We analyze different cases according to the position of $e$ in $G$. 

First, suppose that $e$ is an edge of the unique path $P_e\in \pe$. Thanks to the symmetry of $G$, we can assume $z=u_{2j}$ and $w=u_{2j+1}$, for some $j<m$. To defend $G$ from this attack, we define a defense function $\phi$ as follows:
\begin{itemize}
\item for every $0\leq i<m$, $\phi(u_{2i})=u_{2i+1}$;
\item for every $x\neq t$ of $U_{\mathcal{S}_o}$ that is part of a \tpath, $\phi(x)=x_t$, where $x_t$ is the successor of $x$ in the path from $s$ to $y$ that contains $x$;
\item for every $x$ of $U_{\mathcal{S}_o}$ that is part of an \spath, $\phi(x)=x$.
\end{itemize}
It is easy to see that $\phi(z)=w$ and $\phi(U_{\mathcal{S}_o})=U_s$.

Now, assume that $e$ is an edge of some path $P_o\in \po$. Let $P_o'$ be another path of $\po$ such that $P_o'$ is a \tpath~if $P_o$ is an \spath~and an \spath~otherwise. Let $G'$ be the subgraph of $G$ induced by the vertices of the paths $P_o$ and $P_o'$. To defend from the attack on $e$ we define a defense function as follows: $\phi(v)=\phi'(v)$ if $v$ is a vertex of $G'$ and $\phi(v)=v$ otherwise, where $\phi'$ is the defense function obtained from~\Cref{le:ciclo_dispari} when applied to the odd $2$-melon graph $G'$ when defending from the attack on $e$. Finally, we assume that $e$ is an edge of some path $P_o\in \po$. Due to the symmetry of $G$, we can assume that $P_o$ is a \tpath. If $z=v_{2j+2}$ and $w=v_{2j+1}$, for some $j\leq m_o$, let $P_o'\in \po$ be a \spath~and define $\mathcal{S}_o'=(\mathcal{S}_o\setminus \{P_o'\})\cup \{P_o\}$. The path $P_o'$ is a sequence of vertices $w_0,\ldots,w_{2m_o'+1}$, for some $m_o'\geq 0$ such that $w_0=t$, $w_{2m_o'+1}=s$ and edges are of the form $w_iw_{i+1}$, for $i\leq 2m_o'$. To defend from the attack on $e$ we define a defense function $\phi$ as follows:
\begin{itemize}
\item for every $i<m_o$, $\phi(v_{2i+2})=v_{2i+1}$;
\item for every $i<m_o'$, $\phi(w_{2i+1})=w_{2i+2}$;
\item for every $x$ of $U_{\mathcal{S}_o}$ that is not part of neither $P_o$ or $P_o'$, $\phi(x)=x$.
\end{itemize}
It is easy to see that $\phi(z)=w$ and $\phi(U_{\mathcal{S}_o})=U_{\mathcal{S}_o'}$.

Finally, suppose that $u=v_{2h}$ and $v=v_{2h+1}$, for some $h\in \{0,\ldots,m_o\}$. Let $P_o'\in \po$ be a \spath~and define $\mathcal{S}_o'=(\mathcal{S}_o\setminus P_o')\cup P_o$. The path $P_o'$ is a sequence of vertices $w_0,\ldots,w_{2m_o'+1}$, for some $m_o'\geq 0$ such that $u_0=t$, $u_{2m_o'+1}=s$ and edges are of the form $u_hu_{h+1}$, for $h\in \{0,\ldots,2m_o'\}$. We have that the witness of $U_{\mathcal{S}_o}$ defending $U_{\mathcal{S}_o}$ from the attack on $e$ is given by the mapping $\phi$ defined as follows:
\begin{itemize}
\item for every $h\in \{0,\ldots,m_o\}$, $\phi(v_{2h})=v_{2h+1}$;
\item for every $h\in \{0,\ldots,m_o'\}$, $\phi(w_{2h+1})=w_{2h}$;
\item for every $z\in V(\mathcal{P_e}\setminus \{P_e,P_o'\}$), $\phi(z)=z$.
\end{itemize}
Again, it is easy to see that $\phi(u)=v$ and $\phi(U_{\mathcal{S}_o})=U_{\mathcal{S}_o'}$.
This completes the case analysis and the proof.
\end{proof}

\begin{figure}
\centering
\begin{minipage}{0.5\textwidth}
\begin{tikzpicture}[scale=0.6]
\coordinate (S) at (0,2);\coordinate (T) at (0,-2);\coordinate (A1) at (-2,1);\coordinate (A2) at (-2,0);\coordinate (A3) at (-2,-1);\coordinate (B1) at (0,1);\coordinate (B2) at (0,-1);\coordinate (C1) at (1,1);\coordinate (C2) at (1,-1);\coordinate (D1) at (2,1.2);\coordinate (D2) at (2,0.4);\coordinate (D3) at (2,-0.4);\coordinate (D4) at (2,-1.2);\coordinate (E1) at (3,1.2);\coordinate (E2) at (3,0.4);\coordinate (E3) at (3,-0.4);\coordinate (E4) at (3,-1.2);
\draw(S)--(A1)--(A2)--(A3)--(T)(S)--(B1)--(B2)--(T)(S)--(C1)--(C2)--(T)(S)--(D1)--(D2)--(D3)--(D4)--(T)(S)--(E1)--(E2)--(E3)--(E4)--(T);\draw[fill=white] (T) circle [radius=3pt](A2) circle [radius=3pt](B1) circle [radius=3pt](C1) circle [radius=3pt](D1) circle [radius=3pt](D3) circle [radius=3pt](E1) circle [radius=3pt](E3) circle [radius=3pt];
\draw[fill=black](S) circle [radius=3pt](A1) circle [radius=3pt](A3) circle [radius=3pt](B2) circle [radius=3pt](C2) circle [radius=3pt](D2) circle [radius=3pt](E2) circle [radius=3pt](D4) circle [radius=3pt](E4) circle [radius=3pt];
\node at (0, -3.5) {a.};
\end{tikzpicture}
\end{minipage}%
\begin{minipage}{0.5\textwidth}
\begin{tikzpicture}[scale=0.6]
\coordinate (S) at (0,2);\coordinate (T) at (0,-2);\coordinate (A1) at (-2,1);\coordinate (A2) at (-2,0);\coordinate (A3) at (-2,-1);\coordinate (B1) at (0,1);\coordinate (B2) at (0,-1);\coordinate (C1) at (1,1);\coordinate (C2) at (1,-1);\coordinate (D1) at (2,1.2);\coordinate (D2) at (2,0.4);\coordinate (D3) at (2,-0.4);\coordinate (D4) at (2,-1.2);\coordinate (E1) at (3,1.2);\coordinate (E2) at (3,0.4);\coordinate (E3) at (3,-0.4);\coordinate (E4) at (3,-1.2);
\draw(S)--(A1)--(A2)--(A3)--(T)(S)--(B1)--(B2)--(T)(S)--(C1)--(C2)--(T)(S)--(D1)--(D2)--(D3)--(D4)--(T)(S)--(E1)--(E2)--(E3)--(E4)--(T);
\draw[fill=white] (A1) circle [radius=3pt](A3) circle [radius=3pt](B1) circle [radius=3pt](B2) circle [radius=3pt](C2) circle [radius=3pt](D1) circle [radius=3pt](D3) circle [radius=3pt](E1) circle [radius=3pt](E3) circle [radius=3pt];
\draw[fill=black](S) circle [radius=3pt](T) circle [radius=3pt](A2) circle [radius=3pt](B1) circle [radius=3pt](C1) circle [radius=3pt](D2) circle [radius=3pt](D4) circle [radius=3pt](E2) circle [radius=3pt](E4) circle [radius=3pt];
\node at (0, -3.5) {b.};
\end{tikzpicture}
\end{minipage}
\caption{For each graph in the figure, the black vertices show a configuration of a minimum eternal vertex cover class. Figures a. and b.: mixed melon graph with at least two odd paths and only one even path, configurations $U_s$ and $U_{\mathcal{S}_o}$, respectively, used in the proof of \Cref{lem:mixedmelonsinglee}.}\label{fig:mixe}  
\end{figure}
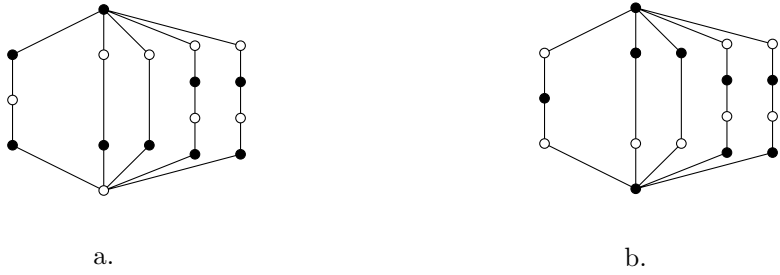

\subsection{Melon Graphs}

In the previous part of this section, we used the classification of melon graphs based on the parity of the paths constituting them to completely solve the \EVC\ problem on this graph class. We re-state the main result of this work, which summarizes the different cases providing a linear-time algorithm for \EVC~on melon graphs.

{\bf \Cref{thm:melons}.} {\em \EVC~is linear-time solvable for melon graphs.}

\begin{proof}
We start by running a BFS on $G$ rooted at its source $s$ to evaluate the cardinality $k$ of $\mathcal{P}$, and the two sets $\pe$ and $\po$. This takes $\mathcal{O}(|V|+|E|)$ time.

Recall that for $k\leq 2$, the claim is already well-known to be true. Hence, assume that $k\geq 3$. According to the cardinality of $\pe$ and $\po$, exactly one among Theorems \ref{lem:odd}, \ref{lem:evenmelon}, \ref{lem:mixedmelonboth}, \ref{lem:mixedmelonsingleo} and \ref{lem:mixedmelonsinglee} applies and the value of $\e{G}$ is obtained in constant time.\end{proof}

\section{Toward Eternal Vertex Cover on Series-Parallel Graphs}\label{sec:sp}

In view of the recursive structure of series-parallel graphs, it is natural to wonder whether it is possible to extend our main result of efficiently solving {\sc Minimum Eternal Vertex Cover} on melon graphs to the whole class of series-parallel graphs. We have reached the conclusion that this question has a negative answer, so we state the following conjecture:

\begin{con}
\EVC\ is NP-hard on series-parallel graphs.
\end{con}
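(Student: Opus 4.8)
Since the statement is a conjecture, what follows is a plan of attack rather than a proof. The goal is a polynomial-time reduction from a known NP-hard problem to \EVC\ restricted to \sep\ graphs, i.e.\ to the decision question ``$\e{G}\le k$?'' on \sep\ inputs. The natural template --- mirroring the existing NP-hardness proofs for \EVC\ on general and on bipartite graphs --- is a gadget reduction from a variant of $3$-SAT. One designs a \emph{variable gadget}: a small \sep\ graph with two distinguished terminals that admits exactly two ``clean'' minimum eternal configurations, one encoding $x_i$ true and one encoding $x_i$ false, and such that an attack can force the defender to commit to one of the two. One designs a \emph{clause gadget}: a \sep\ graph that, once composed at its terminals with the gadgets of its three literals, admits a defense within the global budget $k$ if and only if at least one incident literal is true. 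And one wires everything into a single \sep\ graph $G_\phi$ so that $\e{G_\phi}\le k$ exactly when $\phi$ is satisfiable. The building blocks would come from the melon analysis of \Cref{sec:melon} itself: the two stable configurations of an even or mixed $2$-melon --- which path plays the role of \exte, or how the \spath/\tpath\ roles are split --- already behave like a Boolean choice that survives an attack (\Cref{le:ciclo_pari}, \Cref{le:ciclo_dispari}), and series and parallel composition are precisely the glue available to assemble such gadgets.

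Concretely I would proceed in this order. \textbf{(1)} Abstract the switching behaviour of \Cref{le:ciclo_pari} and \Cref{le:ciclo_dispari} into a clean interface: a gadget with terminals $s,t$, a local state in $\{0,1\}$, a local guard cost, and a specification of which attacks force a flip of the state and which can be absorbed on the spot. \textbf{(2)} Build a variable gadget by composing a few such sub-melons in series, arranged so that the only minimum eternal configurations are the two global states and so that the occupancy of the attachment terminal faithfully broadcasts the state. \textbf{(3)} Build a clause gadget in which the only way to stay within budget is for at least one incident variable to be in its ``satisfying'' state; here an attack inside the clause gadget plays the role of ``checking'' the clause, exactly as an attacked edge of a forced even path in \Cref{lem:evenmelon} forces the external-path role to move. \textbf{(4)} Assemble $G_\phi$, set $k$ to the sum of the local costs plus a bookkeeping term (e.g.\ one per clause), and prove the equivalence: from a satisfying assignment, the explicit family obtained by putting every gadget in the corresponding state is an eternal vertex cover class of size $k$, the defense functions being the ones of \Cref{sec:melon} applied gadget-wise; conversely, any eternal vertex cover class of size $k$ must restrict on each gadget to one of its clean states --- otherwise some local attack cannot be answered within the global budget --- and the shared terminals then propagate a globally consistent assignment that the clause gadgets certify as satisfying.

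\textbf{Where the difficulty lies.} The real obstruction is that \sep\ graphs are exactly the graphs with no $K_4$ minor, equivalently of treewidth at most $2$, so they are extremely rigid: a variable that occurs in several clauses must ``fan out'' its value, yet the obvious fan-out immediately creates a $K_4$ minor and is therefore forbidden. I would try two ways around this. The first is to reduce instead from a problem whose incidence structure is already compatible with a treewidth-$2$ layout --- a planar, bounded-occurrence SAT variant, or a numerical packing problem (in the spirit of $3$-Partition, which is strongly NP-hard) whose ``weights'' are encoded by path lengths, so that the defender is in effect asked to balance cardinalities or parities across bundles of parallel paths, a constraint that \sep\ structure can host without branching. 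The second is to trade spatial fan-out for the temporal dimension of the game: a single variable gadget is ``interrogated'' about its various clauses in successive rounds, so the reduction distributes information through time rather than through forbidden $K_4$-minors. The other delicate point, common to every \EVC\ hardness argument, is the converse direction: one must rule out ``non-modular'' eternal vertex cover classes that do not decompose gadget by gadget, and controlling how far a size-$k$ class can deviate locally --- leaning on the tightness $\m{G}\le\e{G}\le\m{G}+1$ that \Cref{sec:melon} establishes for melon graphs --- is what would ultimately make or break the proof.
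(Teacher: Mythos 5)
The statement you were asked to prove is, in the paper itself, only a conjecture: the authors do not prove NP-hardness on \sep\ graphs, they merely gather supporting evidence (\Cref{lem:unbounded}, which exhibits biconnected \sep\ graphs where $\e{G_k}-\m{G_k}\geq k$, and \Cref{lem:alt}, which shows melon graphs are exactly the \sep\ graphs with $\al\leq 1$, plus the observation that defense functions are non-local). Your submission is likewise, by your own admission, a plan of attack rather than a proof, so strictly speaking there is nothing to verify: no variable or clause gadget is actually constructed, and the two central difficulties you name --- fanning out a variable's value to several clauses without creating a $K_4$ minor, and ruling out ``non-modular'' eternal vertex cover classes in the converse direction --- are identified but not overcome. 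Identifying these obstacles is consistent with the paper's own discussion (the authors argue precisely that relating $\e{G}$ to $\e{G_1}$ and $\e{G_2}$ under series or parallel composition is impractical because defenses are global), but naming an obstacle is not the same as resolving it, so the proposal has a genuine gap at exactly the point where a proof would have to begin.

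One concrete misstep is worth flagging: you propose to control local deviations of a size-$k$ class by ``leaning on the tightness $\m{G}\le\e{G}\le\m{G}+1$ that \Cref{sec:melon} establishes for melon graphs.'' That tightness is a theorem only for melon graphs; it fails badly once melons are composed in series and parallel, which is exactly what your gadgets would do. Indeed, \Cref{lem:unbounded} builds $G_k$ from series and parallel compositions of $2$-length paths (i.e.\ of melon-like pieces) and shows $\e{G_k}-\m{G_k}\geq k$, with the gap driven by high-degree cut-like vertices that force an extra guard in every closed neighborhood. So in any gadget assembly the per-gadget cost is not automatically $\m{\cdot}$ or $\m{\cdot}+1$, and the bookkeeping term you wave at (``one per clause'') would have to be justified against precisely this kind of blow-up. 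A workable reduction would more plausibly exploit this unbounded-gap phenomenon (attachment vertices whose neighborhoods must each host a spare guard) as the counting mechanism itself, rather than assume the melon-style tightness that the paper shows does not survive composition.
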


This conjecture is based on many considerations, and the rest of this section is devoted to formalizing a couple of them. They show that melon graphs and \sep\ graphs behave differently w.r.t.\ the eternal vertex cover number and their SP-decompositions have different properties. These differences support our conjecture that computing the eternal vertex cover number on \sep\ graphs is significantly harder than computing the vertex cover number on \sep\ graphs or the eternal vertex cover number on melon graphs.

Preliminarily, it is worth noting that, given any graph $G$, it holds $\m{G} \leq \e{G} \leq 2 \m{G}$ \cite{KlostermeyerM09} and both the bounds are attainable: as an example, consider a cycle and an odd length path, respectively. Both these graphs are, in fact, \sep\ graphs, although rather special. In particular, paths (for which vertex cover and eternal vertex cover numbers are very far) are not biconnected; on the other hand, $k$-melon graphs, with $k\geq 2$, are biconnected, and vertex cover and eternal vertex cover numbers are either coinciding or very close. So, one could wonder whether the biconnectivity has some influence on the difference between these two parameters. The following result gives a negative answer to this question, showing that there are biconnected \sep\ graphs for which vertex cover and eternal vertex cover numbers are arbitrarily far in terms of difference and close to 2 in terms of ratio. As a side effect, this means that $2$ is the best approximation ratio in terms of $vc$ for biconnected \sep\ graphs.

\begin{lemma}\label{lem:unbounded}
For any integer $k\geq 0$, there is a biconnected \sep\ graph $G_{k}$ such that:
\begin{itemize}
\item $\e{G_k}-\m{G_k}\geq k$, and
\item $\e{G_k}\geq (2-\frac{2}{k-2})\m{G_k}$.
\end{itemize}
\end{lemma}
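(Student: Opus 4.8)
The plan is to construct $G_k$ explicitly as a biconnected series-parallel graph built from many long odd paths joined at a pair of terminals — essentially a mixed melon graph with a carefully chosen profile of path lengths — possibly augmented with a few short connectors to force biconnectivity without disturbing the parameter calculations. Concretely, I would take $G_k$ to be the parallel composition of $k-2$ internally disjoint paths, each of odd length $3$ (so each contributes two internal degree-two vertices), between source $s$ and sink $t$. This is a $(k-2)$-melon graph, hence series-parallel; it is biconnected because every internal vertex lies on a cycle through $s$ and $t$ and there are at least two such paths. The idea is that such a graph has a small vertex cover (roughly one-third of its vertices, by the structure of odd paths) but a large eternal vertex cover number, since protecting the many single-guarded edges forces the defender to maintain close to two-thirds of the vertices occupied.

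The key steps, in order, are the following. \emph{Step 1: compute $\m{G_k}$.} For a path of length $3$ the minimum vertex cover uses the two internal vertices, plus possibly one of the endpoints $s,t$; across the parallel composition one must cover all $2(k-2)$ internal edges-plus-endpoints consistently, and a direct count gives $\m{G_k} = 2(k-2) + c$ for a small constant $c\in\{0,1,2\}$ coming from $s$ and $t$. I would nail down $c$ by observing that at least one of $s,t$ must be in any vertex cover (each is incident to edges of every path), so $\m{G_k} = 2(k-2)+1$ after a short argument. \emph{Step 2: lower-bound $\e{G_k}$.} Here I invoke the earlier results on mixed melon graphs — since all $k-2$ paths are odd, Theorem~\ref{lem:mixedmelonsinglee} (or the relevant case depending on how I arrange parities) does not directly apply; instead I would engineer $G_k$ to be an \emph{even} melon graph of long even paths, so that Theorem~\ref{lem:evenmelon} gives $\e{G_k} = \m{G_k}+1$ — but that only yields difference $1$, not $k$. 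So the construction must be different: rather than a melon, $G_k$ should be a \emph{series} composition of $k$ triangles (each triangle being the parallel composition of an edge and a path of length $2$). A chain of $k$ triangles is biconnected, series-parallel, has $2k+1$ vertices, minimum vertex cover $k$, and — because each triangle behaves like an odd cycle fragment forcing an extra guard — eternal vertex cover number roughly $2k-O(1)$. \emph{Step 3: verify the two inequalities.} With $\m{G_k}=k$ and $\e{G_k}=2k-c'$ for small constant $c'$, we get $\e{G_k}-\m{G_k}=k-c'\geq k$ after reindexing (replace $k$ by $k+c'$ in the construction), and $\e{G_k}/\m{G_k} = 2 - c'/k \geq 2-\tfrac{2}{k-2}$ for appropriate $c'$ and large enough $k$.

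To make the lower bound on $\e{G_k}$ rigorous I would use the standard potential/weight argument for eternal vertex cover: exhibit an attacker strategy (repeatedly attacking the single-guarded edge inside a triangle that currently carries only one guard) and show that responding to it forces, in some configuration, at least two guards on all but one triangle simultaneously — a counting argument over the triangle-chain structure, using that guards cannot teleport between non-adjacent triangles in one round. Alternatively, and more cleanly, I would look for a known lower bound of the form $\e{G}\geq \m{G} + (\text{number of "odd" blocks})$ and apply it with each triangle as a block; if such a lemma is not available I prove the block-wise bound directly by a cut argument at each articulation-like pair of the decomposition.

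\textbf{Main obstacle.} The delicate part is the lower bound $\e{G_k}\geq (2-\tfrac{2}{k-2})\m{G_k}$: one must show the defender genuinely \emph{needs} close to two guards per triangle, not merely that a naive strategy uses that many. This requires either a clean potential-function argument tracking, per triangle, the parity of the guard count and the "debt" created by an attack, or an adversary argument that drives the configuration into a state where too many triangles are simultaneously under-guarded. Getting the constant $c'$ small enough to yield the stated ratio (as opposed to something like $2-\tfrac{C}{k}$ with a larger $C$) is where the bookkeeping will be most sensitive, and is the step I would expect to consume most of the work.
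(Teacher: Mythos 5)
There is a genuine gap: the construction you settle on — a series composition of $k$ triangles — fails the statement on both counts. First, it is not biconnected: every vertex at which two consecutive triangles are merged is a cut vertex, and the whole point of the lemma is to exhibit \emph{biconnected} \sep\ graphs with a large gap (non-biconnected examples such as paths are already known and are explicitly set aside in the paper). Second, your claimed lower bound $\e{G_k}\approx 2k$ for the triangle chain is false. Write the chain as vertices $v_0,p_1,v_1,p_2,v_2,\ldots,p_k,v_k$, where triangle $T_i$ has vertex set $\{v_{i-1},p_i,v_i\}$. Then $\m{G_k}=k+1$, and $k+2$ guards suffice eternally: keep guards on all of $v_0,\ldots,v_k$ plus one spare on some $p_j$. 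An attack on an edge with both endpoints guarded is answered by a swap; an attack on $v_{i-1}p_i$ (or $p_iv_i$) is answered by rotating the guards along the chain between $p_j$ and the attacked triangle in a single round ($p_j\to v_j$, $v_j\to v_{j+1}$, \ldots, $v_{i-1}\to p_i$), whose net effect is to relocate the spare to $p_i$ while every $v_\ell$ stays covered. So $\e{G_k}\le \m{G_k}+1$, and the intuition that ``each triangle forces an extra guard'' collapses: one spare can be shuttled, within one round, to wherever it is needed, because the attacker only attacks one edge per round and a defense may move all guards simultaneously. Your fallback hope for a generic lower bound of the form $\e{G}\ge \m{G}+(\text{number of odd blocks})$ cannot exist for the same reason. (The earlier melon-based attempts you discarded were indeed dead ends, as you noted, since odd melons satisfy $\e{G}=\m{G}$ and even melons give only $+1$.)

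The paper's construction is designed precisely to make the extra guards non-shareable. It takes $H_k=K_{2,k+3}$ (the melon of $k+3$ paths of length $2$), forms $H_k'$ by composing $H_k$ in series with a path of length $2$, and defines $G_k$ as the parallel composition of $k$ copies of $H_k'$ with one copy of $H_k$; this graph is biconnected. Its unique minimum vertex cover is the set $U=\{s_1,\ldots,s_k,s,t\}$ of the $k+2$ high-degree vertices, so $\m{G_k}=k+2$; any configuration of a minimum eternal vertex cover class must contain $U$ (otherwise its size would already exceed $2\m{G_k}$), and for each $u\in U$ a defense of an attack on an unguarded edge at $u$ forces a second guard inside $N[u]$. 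Since the closed neighborhoods of $s_1,\ldots,s_k$ are pairwise disjoint, these extra guards cannot be shared, giving $\e{G_k}\ge 2k+2$ and hence both inequalities. If you want to salvage your approach, you need this kind of mechanism — many vertices that are forced into every configuration and whose required spare guards live in pairwise disjoint regions — rather than a chain of small blocks through which a single spare can circulate.
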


\begin{proof}
Let $H_k$ denote the $(k+3)$-melon graph where each of the $k+3$ paths is of length 2; in other words, $H_k$ is a complete bipartite graph $K_{2, k+3}$. Let $H_k'$ be the series composition of $H_k$ and of a 2-length path so that the source of $H_k'$ coincides with the source of the 2-length path and the sink of $H_k'$ coincides with the sink of $H_k$. For every $k\geq 2$, we define the biconnected series-parallel graph $G_k$ as the parallel composition of $k$ copies of $H_k'$ and one copy of $H_k$. Let $s_1, \ldots, s_k$, $s$ and $t$ be the sources of the $k$ copies of $H_k$ inside $H_k'$, the source of $G_k$ and the sink of $G_k$, respectively.  Note that $s$ and $t$ have a high degree, due to the presence of $H_k$, which is put in parallel with the copies of $H_k'$. See~\Cref{fig:ub} for a representation of $G_3$.

In order to show that $\e{G_k}$ and $\m{G_k}$ fulfill the inequalities of the claim, in the following, we first exactly evaluate $\m{G_k}$, then provide a lower bound for $\e{G_k}$.

Preliminarily, observe that $U=\{s_1,\ldots,s_k,s, t\}$ is the unique minimum vertex cover of $G_k$. Indeed, for any other vertex cover $U'\neq U$, if $U\subset U'$ then trivially $|U'|>|U|$, otherwise $U'$ does not contain $U$ and, for example, $s_i\notin U'$. 
This means that each of the $k+4$ neighbors of $s_i$ belongs to $U'$. Since the neighborhoods of each $s_j$ are disjoint, $|U'|\geq |U|+k+3=2k+5$. Even worse bounds are obtained when assuming that $s\notin U'$ or $t\notin U'$.  Thus, it holds that $\m{G_k}=k+2$.

Now, let $\mathcal{U}$ be a minimum eternal vertex cover class of $G_k$. Each configuration $U'$ of $\mathcal{U}$ must necessarily contain $U$ because, if by contradiction we supposed $U'$ does not include $U$, then we would have obtained $\e{G_k} = |U'| \geq 2k+5 > 2\m{G_k}$, which is absurd because $\e{G_k} \leq 2 \m{G_k}$~\cite{KlostermeyerM09}.

We exploit the property that $U \subset U'$, for each $U' \in \mathcal{U}$ to provide a lower bound for $\e{G_k}$. The informal idea is that guards on the vertices of $U$, which are the only vertices of $G_k$ having high degree, require an additional guard hosted by a neighboring vertex, so that they can be replaced to still defend $G_k$ whenever moved by the strategy. 

We now prove that every configuration $U' \in \mathcal{U}$ contains a vertex in $N[u]$ besides $u$, for each $u\in U$. If $N[u] \subseteq U'$, the claim is trivially true, so assume that there exists a neighbor $v$ of $u$ that is not in $U'$. Since $U'$ is a configuration of an eternal vertex cover class of $G_k$, there exists a defense function $\phi$ that protects $U'$ from the attack on $uv$ and, in particular, $\phi(u)=v$. 
Since $\phi(U')$, the configuration obtained from $U'$ after the defense, contains $U$ then it must exist a vertex $v'\in U'$ such that $\phi(v')=u$. Thus, $v'$ is a neighbor of $u$ that belongs to $U'$, which completes the proof of the claim. This means that $\e{G_k}\geq 2k+2$.

Thanks to the previous claim and to the fact that the $k$ sets $N[s_i]$ are pairwise disjoint, it holds that $|U'|\geq |U|+k$, that is $\e{G_k}-\m{G_k} \geq k$. Moreover, $\frac{\e{G_k}}{\m{G_k}}\geq \frac{2k+2}{k+2}=2-\frac{2}{k-2}$.
\end{proof}

\begin{figure}
\centering
\begin{tikzpicture}[scale=0.65]
\coordinate (A) at (-2,6);\coordinate (X1) at (-2.8,5);\coordinate (X2) at (-2,5);\coordinate (X3) at (-1.2,5);\coordinate (A1) at (-4,3);\coordinate (A2) at (-2,3);\coordinate (A3) at (0,3);\coordinate (B1) at (-7.25,1);\coordinate (B2) at (-6.75,1);\coordinate (B3) at (-6.25,1);\coordinate (B4) at (-5.75,1);\coordinate (B5) at (-5.25,1);\coordinate (B6) at (-4.75,1);\coordinate (B7) at (-3.25,1);\coordinate (B8) at (-2.75,1);\coordinate (B9) at (-2.25,1);\coordinate (B10) at (-1.75,1);\coordinate (B11) at (-1.25,1);\coordinate (B12) at (-0.75,1);\coordinate (B13) at (0.75,1);\coordinate (B14) at (1.25,1);\coordinate (B15) at (1.75,1);\coordinate (B16) at (2.25,1);\coordinate (B17) at (2.75,1);\coordinate (B18) at (3.25,1);\coordinate (D1) at (4.75,1);\coordinate (D2) at (5.25,1);\coordinate (D3) at (5.75,1);\coordinate (D4) at (6.25,1);\coordinate (D5) at (6.75,1);\coordinate (D6) at (7.25,1);\coordinate (C) at (0,-2);
\draw (A1)--(X1)--(A)--(X2)--(A2)(A3)--(X3)--(A) (A1)--(B1)--(C)--(B2)--(A1) (A1)--(B3)--(C)--(B4)--(A1) (A1)--(B5)--(C)--(B6)--(A1) (A2)--(B7)--(C)--(B8)--(A2) (A2)--(B9)--(C)--(B10)--(A2) (A2)--(B11)--(C)--(B12)--(A2) (A3)--(B13)--(C)--(B14)--(A3) (A3)--(B15)--(C)--(B16)--(A3) (A3)--(B17)--(C)--(B18)--(A3)(A)--(D1)--(C)--(D2)--(A)(A)--(D3)--(C)--(D4)--(A)(A)--(D5)--(C)--(D6)--(A);
\draw[fill=white] (A) circle [radius=3pt](X1) circle [radius=3pt](X2) circle [radius=3pt](X3) circle [radius=3pt](A1) circle [radius=3pt](A2) circle[radius=3pt](A3) circle [radius=3pt](B1) circle [radius=3pt](B2) circle [radius=3pt](B3) circle [radius=3pt](B4) circle [radius=3pt](B5) circle [radius=3pt](B6) circle [radius=3pt](B7) circle [radius=3pt](B8) circle [radius=3pt](B9) circle [radius=3pt](B10) circle [radius=3pt](B11) circle [radius=3pt](B12) circle [radius=3pt](B13) circle [radius=3pt](B14) circle [radius=3pt](B15) circle [radius=3pt](B16) circle [radius=3pt](B17) circle [radius=3pt](B18) circle [radius=3pt](C) circle [radius=3pt](D1) circle [radius=3pt](D2) circle [radius=3pt](D3) circle [radius=3pt](D4) circle [radius=3pt](D5) circle [radius=3pt](D6) circle [radius=3pt];
\draw[fill=black](A) circle [radius=3pt](A1) circle [radius=3pt](A2) circle [radius=3pt](A3) circle [radius=3pt](C) circle [radius=3pt];
\draw[fill=red](B1) circle [radius=3pt](B7) circle [radius=3pt](X3) circle [radius=3pt];
\node[above] at (A) {$s$};\node[above left] at (A1) {$s_1$};\node[above left] at (A2) {$s_2$};\node[above right] at (A3) {$s_3$};\node[below] at (C) {$t$};
\end{tikzpicture}
\caption{The figure shows the \sep\ graph $G_3$ described in the proof of~\Cref{lem:unbounded}. The black vertices represent its unique minimum vertex cover $U$. The red vertices are an example of the position of guards to be added to $U$ in order to get an eternal vertex cover configuration $U$.}\label{fig:ub}  
\end{figure}
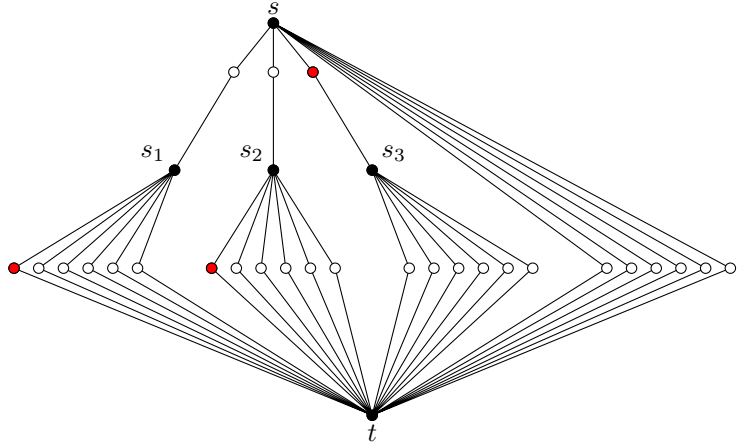

We propose a graph parameter that is well-defined on \sep\ graphs, which allows us to characterize melon graphs showing that they have a much simpler structure than general \sep\ graphs.

For a series-parallel graph $G$, we define the parameter $\al(G)$ as the maximum number of alternations between parallel and series nodes or {\em vice versa} in any path connecting the root and a leaf in any SP-decomposition of $G$. 

This parameter is clearly unbounded for the class of \sep\ graphs. The following result shows that melon graphs can be characterized as \sep\ graphs with $\al$ at most 1.

\begin{lemma}\label{lem:alt} 
For every melon graph $G$, then $\al(G)\leq 1$. Conversely, for every series-parallel graph $G$ with $\al(G)\leq 1$, either $G$ is a $k$-melon graph or is a path with possibly multiple edges.
\end{lemma}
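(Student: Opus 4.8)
The plan is to prove the two directions separately, in each case arguing directly from the definitions of $\mathrm{alt}$ and of melon graphs. For the forward direction, let $G$ be a $k$-melon graph constituted by the paths $P^{(1)},\dots,P^{(k)}$. If $k=1$, $G$ is a single path $P^{(1)}=v_0v_1\cdots v_\ell$, which is the series composition of its $\ell$ edges; its natural SP-decomposition tree has only series internal nodes, so no alternation occurs and $\mathrm{alt}(G)=0$. If $k\ge 2$, I would build an explicit SP-decomposition: the root is a parallel node with the $k$ subtrees $T^{(1)},\dots,T^{(k)}$ hanging below it, where each $T^{(i)}$ is the all-series decomposition of the path $P^{(i)}$ described above (if $P^{(i)}$ has length $1$, $T^{(i)}$ is just a leaf). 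Along any root-to-leaf path of this tree one meets at most the parallel root followed by series nodes, i.e. at most one alternation, so $\mathrm{alt}(G)\le 1$. Since $\mathrm{alt}$ is defined as the maximum over all SP-decompositions, I must be slightly careful here: I claim $\mathrm{alt}(G)$ is the \emph{minimum} over SP-decompositions of the alternation count — I would re-read the definition on the previous page (it says ``in any SP-decomposition'', which I read as: there exists one witnessing the bound, since otherwise the parameter would be useless as stated and \Cref{lem:alt} would be false). Granting that reading, exhibiting the decomposition above suffices for the forward direction.

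For the converse, let $G$ be a series-parallel graph with $\mathrm{alt}(G)\le 1$, and fix an SP-decomposition tree $T$ of $G$ witnessing this bound. I would argue by considering the label of the root of $T$. If the root is a series node, then \emph{no} node strictly below it can be a parallel node: a parallel node there would create a root-to-leaf path series, parallel (and, unless that parallel node is itself at the bottom, more), which is already one alternation, and then \emph{any} series node below that parallel node — which must exist, since a parallel composition of two single edges is a $2$-cycle and a $2$-cycle as a component of a larger series-parallel graph forces further structure, but more simply: the two children of a parallel node are series-parallel graphs, and if either is not a single edge it contains a further composition node of some label — would push the alternation count to $2$. Hence if the root is series, every internal node of $T$ is series, so $G$ is obtained by iterated series composition of edges, i.e. $G$ is a single path; allowing the base graphs to be multigraph edges, $G$ is a path with possibly multiple edges. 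If instead the root is a parallel node, the symmetric argument shows no parallel node occurs strictly below it, so each of the root's ``maximal series subtrees'' is an all-series decomposition of a path (or a single edge); thus $G$ is the parallel composition of paths sharing exactly the two terminals $s,t$, i.e. a $k$-melon graph for $k$ equal to the number of children-subtrees (collapsing the binary parallel root into a single $k$-ary one). Finally, the single-edge base case gives $G=K_2$, which is both a $1$-melon graph and a (trivial) path, consistent with the statement.

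The routine but slightly fiddly points I would spell out carefully are: (i) that an SP-decomposition tree can always be chosen so that consecutive nodes of the same label are merged into one multi-ary node without changing $G$ or increasing the alternation count — this is what lets me speak of ``the label of the root'' and ``maximal blocks of equal labels'' cleanly; and (ii) that a series block of edges really is a path and a parallel block of paths really is a melon graph, which is immediate from the definitions of the two compositions (series glues $t_1$ to $s_2$, parallel identifies the sources and the sinks). The main obstacle I anticipate is purely one of bookkeeping around the definition of $\mathrm{alt}$: since it is phrased as a ``maximum'' over all SP-decompositions, I need to pin down that a graph with a complicated ``bad'' decomposition is not thereby disqualified, i.e. that the intended parameter is really the minimum achievable alternation depth (equivalently: $\mathrm{alt}(G)\le 1$ means \emph{some} decomposition has alternation $\le 1$). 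Once that convention is fixed, both directions are short structural arguments; the converse is essentially a case split on the root label of a witnessing decomposition, plus the merging normalization in (i).
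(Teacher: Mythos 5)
Your overall skeleton (explicit decompositions for the forward direction, a case split on the root label for the converse) is the same as the paper's, but two steps as written do not hold up. First, you resolve the definitional question the wrong way. The paper's $\al(G)$ counts alternations in \emph{any} SP-decomposition, i.e.\ the bound must hold for every decomposition, and its forward proof establishes exactly that by induction on $k$: in every SP-decomposition of a $k$-melon with $k\geq 2$ the root is parallel and every parallel split separates whole paths (internal path vertices have degree two and only $s,t$ are shared), while a path admits only all-series decompositions; hence every root-to-leaf path is a block of parallel nodes followed by series nodes. Exhibiting one good decomposition, as you do, does not bound a maximum over all decompositions, and your justification that the lemma would otherwise be false is incorrect --- the lemma is true under the ``every decomposition'' reading, which is the one the paper proves. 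The missing argument is easy, but it is missing.

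Second, your series-root case in the converse is internally inconsistent. You claim no parallel node can occur strictly below a series root, but a parallel node all of whose descendants are leaves contributes only one alternation and is allowed; indeed, since each parallel edge of $G$ is a separate leaf of the SP-decomposition as defined in the paper, this is precisely how the ``multiple edges'' in the conclusion arise, and your patch of ``allowing the base graphs to be multigraph edges'' is not licensed by that definition. The correct statement, and the one the paper's proof uses, is that a parallel node below a series root can have no series node beneath it, so each such parallel subtree is a bundle of parallel edges and $G$ is a series composition of edges and bundles, i.e.\ a path with possibly multiple edges (the paper phrases this as ``a series of melon graphs in which the length of every path is one''). Your parallel-root case is fine once consecutive equal labels are merged as in your normalization (i), which plays the role of the paper's observation that the root type is determined by whether $G$ has a cut-vertex.
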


\begin{proof}
First, let $G$ be a $k$-melon graph, for some $k\geq 1$, and let us prove by induction on $k$ that $\al(G) \leq 1$. If $k=1$, then $G$ is either a single edge or a path, which is obtained recursively by the series composition of two $1$-melon graphs. Thus, all non-leaf vertices of any SP-decomposition of $G$ are series vertices and then $\al(G)=0$. 

Suppose now $k\geq 2$. Then $G$ can only be obtained recursively by the parallel composition of two $x$- and $y$-melon graphs with $x,y\geq 1$ and $x+y=k$. Thus, every path $P$ connecting the root and a leaf in any SP-decomposition of $G$ starts with a non-empty sequence of parallel nodes and continues with a sequence of series nodes and so $P$ contains at most one alternation: $\al(G)\leq 1$.

Now, let $G$ be a series-parallel graph with $\al(G)\leq 1$ and fix any SP-decomposition $T$ of $G$. If $G$ is a single edge, then the statement trivially holds, so from now on we assume that $G$ has at least two edges. It is well known that the type of the root is the same in every SP-decomposition of $G$: indeed, the root is a series node if $G$ contains a cut-vertex and is a parallel node otherwise. If the root of $T$ is a parallel node, then $G$ is constituted by a set of parallel paths between two vertices, that is, $G$ is a melon graph. If the root of $T$ is a series node, then $G$ is a series of melon graphs in which the length of every path is one, {\em i.e.}, a set of multiple edges.
\end{proof}

Algorithmic techniques exploiting results on sub-structures, like divide and conquer or dynamic programming, look to be very natural on \sep\ graphs due to their recursive nature. Nevertheless, they do not immediately apply: while $\al\leq 1 $ for melon graphs guarantees a very limited number of cases, for the general case (where the \sep\ graph $G$ is constituted by either a series or a parallel composition of two \sep\ graphs $G_1$ and $G_2$), it is impractical to relate $\e{G}$ to $\e{G_1}$ and $\e{G_2}$.

The reason is that the defense strategies for the \EVC~problem are, in general, not local, that is, the defense against an attack may require that every guard of a given configuration to shift to a neighbor (see, for example, the strategy described in the proof of~\Cref{lem:odd}). The idea is that combining the local information about $G_1$ and $G_2$ graphs and elaborating such information to a global solution for $G$ is far from trivial.

\section{Conclusions}

The eternal vertex cover is a graph-theoretic representation of a 2-player game in rounds on a graph. Some vertices of this graph are occupied by so-called guards, who are able to cover all the edges that are incident to those vertices. The {\em attacker} is allowed to move one guard \textit{per} round along an edge with the goal of preventing the defender from winning. The {\em defender} replies by possibly moving the remaining guards along edges; it wins if it can make sure that, at every round of the game, all edges of the graph are covered. The task of the {\sc Minimum Eternal Vertex Cover} problem is to determine the minimum number of guards required by the defender to win. This problem has applications in network security where one aims to defend from a long series of malicious attacks.

The problem is known to be NP-hard in general. This paper fits in the research direction of understanding the structural and complexity properties of this problem when restricted to graph classes. We restrict our attention to the \sep\ graphs, a reasonably well-understood class for which many computationally hard problems become easy due to their recursive nature. 

We have shown that the {\sc Minimum Eternal Vertex Cover} problem can be solved in linear time for melon graphs: \sep\ graphs that are parallel composition of paths. This result is based on a case analysis of the structure of the input melon graph and generalizes the solution for cycles. Moreover, we have conjectured that this problem stays NP-hard on the whole class of \sep\ graphs. We have argued in favor of this conjecture exploiting the structural differences between melon and \sep\ graph based on the (eternal) vertex cover number and the SP-decomposition tree.

To further expand this work, we plan to consider the {\sc Minimum Eternal Vertex Cover} problem on outerplanar graphs, \textit{i.e.}, planar graphs that have a plane drawing with all vertices on the outer face. This class is interesting because, on the one hand, it is a subclass of \sep\ graphs and contains the maximal outerplanar graphs for which this problem is linear-time solvable~\cite{BKPW22}; on the other hand, the parameter $\al$ is unbounded for outerplanar graphs. We leave open whether a result similar to~\Cref{lem:unbounded} holds for this class.

\bibliographystyle{plainurl}
\bibliography{references}
\end{document}